\newtheorem{lemma}{Lemma}
\newtheorem{theorem}{Theorem}
\newtheorem{proposition}{Proposition}
\newtheorem{conjecture}{Conjecture}
\renewcommand{\thefootnote}{}
\def\R{\mathbb {R}}
\def\E{\mathsf{E}}
\def\C{\mathbb{C}}
\def\({\big (}
\def\){\big )}
\def\eps{\varepsilon}
\def \ES{Erd{\H o}s-Szemer\'edi~}
\def \ST{Szemer\'edi-Trotter~}
\def \erdos{Erd\H os~}
\def \szem{Szemer\'edi~}
\def\exponent{\frac{2}{1167}}
\def \Reps{\mathcal{R}_\eps}
\newcommand{\Addresses}{{
  \bigskip
  \footnotesize

  M.~Rudnev, \texttt{Department of Mathematics, University of Bristol, UK}\par\nopagebreak
  \textit{E-mail address:} \texttt{misha.rudnev@bristol.ac.uk}

  \medskip

  S.~Stevens, \texttt{Johann Radon Institute for Computational and Applied Mathematics (RICAM), Linz, Austria}\par\nopagebreak
  \textit{E-mail address:}, \texttt{sophie.stevens@ricam.oeaw.ac.at}

}}
\title{An update on the sum-product problem}
\author{Misha Rudnev \thanks{The first author is partially supported by the Leverhulme  Trust Grant RPG--2017--371.}\\ University of Bristol, UK 
\and\  Sophie Stevens \thanks{The second author is supported by the Austrian Science Fund FWF Project P~30405-N32. }\\ RICAM, Linz, Austria}
\begin{document}

\maketitle
\abstract{
We improve the best known sum-product estimates over the reals.  We prove that
\[
\max(|A+A|,|A+A|)\geq |A|^{\frac{4}{3} + \frac{2}{1167} - o(1)}\,,
\]
for a  finite $A\subset \R$, following a streamlining of the arguments of Solymosi, Konyagin and Shkredov. We include several new observations to our techniques.

Furthermore,
\[
|AA+AA|\geq  |A|^{\frac{127}{80} - o(1)}\,.
\]
Besides, for a convex set $A$ we show that
\[
|A+A|\geq |A|^{\frac{30}{19}-o(1)}\,.
\]
This paper is largely self-contained.
}

\section{Introduction}\label{sec:intro}
Throughout $A\subset \R$ is a finite set of positive real numbers, whose cardinality $|A|$ exceeds some absolute constant. All other sets, 
denoted by upper-case letters, are also finite. The sumset of two sets $A,B$ is defined as
\[
A+B:=\{a+b\colon a\in A,\,b\in B\}\,,
\]
with similar notations for the product and ratio sets, $AB$, $A/B$, etc. 

The \ES sum-product conjecture, originally stated over the integers \cite{ES}, is the following.	
	\begin{conjecture}	\label{conj:sumprod}
		For all $\delta < 1$ and sufficiently large $A\subseteq \R$  ,
		\begin{equation}\label{e:sumprodconj}
			\max\{|A+A|,|AA|\}\geq |A|^{1+\delta}\,.
		\end{equation}	 
	\end{conjecture}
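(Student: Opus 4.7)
The plan is to reduce the sum-product conjecture to a bound on the multiplicative energy
$\mathsf{E}^\times(A) := |\{(a,b,c,d)\in A^4 : ab=cd\}|$. By Cauchy--Schwarz,
$|AA|\geq |A|^4/\mathsf{E}^\times(A)$, so if for any candidate exponent $\delta<1$ the assumption
$|A+A|\leq |A|^{1+\delta}$ can be shown to force $\mathsf{E}^\times(A)\leq |A|^{3-\delta-o(1)}$,
then $|AA|\geq |A|^{1+\delta-o(1)}$ and the conjecture follows. The workhorse throughout is the
\ST incidence theorem in the real plane.

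First I would recall Solymosi's geometric decomposition: sort $A/A$ as $r_1<r_2<\cdots$ and
pair consecutive ratios; the intersections of $A\times A$ with the lines $y=r_i x$ and
$y=r_{i+1}x$ produce disjoint contributions to $(A+A)\times(A+A)$, yielding
$\mathsf{E}^\times(A)\lesssim |A+A|^2\log|A|$ and hence the exponent $4/3$. To push past $4/3$ I
would isolate the popular ratios $\Reps:=\{x\in A/A:r_{A/A}(x)\geq \tau\}$ at a dyadically
chosen threshold $\tau$ and apply \ST to the pencil of lines through the origin of slopes in
$\Reps$, together with an auxiliary point set such as $(A+A)\times(A+A)$, extracting a
fourth-moment gain in the spirit of Konyagin--Shkredov.

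The next layer is a hierarchy of higher-order energies. The third multiplicative energy
$\mathsf{E}_3^\times(A):=\sum_x r_{A/A}(x)^3$ controls the clustering of ratios, and the
H\"older inequality $\mathsf{E}^\times(A)^2\leq \mathsf{E}_3^\times(A)\cdot |A|^2$ allows a bound
on $\mathsf{E}_3^\times$ to be converted into one on $\mathsf{E}^\times$. An eigenvalue or
operator-norm analysis of the matrix $M_{x,y}:=r_{A/A}(xy^{-1})$ feeds into an iterative
scheme, each round of which would peel off an additional factor of $|A|^\eps$ beyond the
baseline. Optimising the interplay between the dyadic thresholds, the hierarchy of energies
$\mathsf{E}_k^\times$, and the additive collision counts is the structural heart of the
argument.

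The main obstacle, and the reason the current record stands at only $4/3+\exponent$, is that
every application of \ST in this framework loses a constant-power factor through an
intermediate Cauchy--Schwarz or H\"older step that cannot be tight simultaneously with the
incidence extremiser. Each iteration gains a fixed fraction of the remaining deficit rather
than all of it, so the sequence of exponents one can produce converges to a limit strictly
less than $1$. Closing the gap to arbitrary $\delta<1$ would, I expect, require either an
incidence bound genuinely stronger than \ST for the clustered multiplicative configurations
that actually arise in practice, or a structural dichotomy exploiting the order and convexity
of $\R$ that rules out the simultaneous smallness of $|A+A|$ and largeness of
$\mathsf{E}^\times(A)$ without paying the usual $\sqrt{\cdot}$-loss. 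Identifying such a new
ingredient is where the real effort must be concentrated.
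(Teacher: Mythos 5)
The statement you were asked to prove is labelled \emph{Conjecture}~\ref{conj:sumprod} in the paper, and indeed it is the Erd\H os--Szemer\'edi conjecture, which remains open. The paper does not prove it, and neither can you: there is no ``paper's own proof'' to compare against. To your credit, you do not claim to prove it either --- your proposal is an honest survey of the strategy and an explanation of why it stalls well short of arbitrary $\delta<1$. That self-diagnosis is the correct conclusion, and the most important thing to say about the conjecture at present.

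Your sketch of the partial-results machinery is broadly consistent with the paper's actual Theorem~\ref{t:sumprod}, which reaches only $\delta=\tfrac{1}{3}+\tfrac{2}{1167}-o(1)$. The reduction $|AA|\geq |A|^4/\E^\times(A)$, Solymosi's inequality $\E^\times(A)\lesssim|A+A|^2$, the dyadic localisation to a popular slope set $S_\tau$, the passage to higher-order energies $\E_3$, and the repeated interplay of \ST with Cauchy--Schwarz and H\"older are all genuinely the bones of the Konyagin--Shkredov--Shakan line the paper follows. Two points of divergence from the paper's actual implementation are worth flagging. First, you invoke an ``eigenvalue or operator-norm analysis'' as part of the iterative scheme; the paper explicitly sets out to \emph{avoid} Shkredov's spectral method, replacing it with the purely combinatorial regularisation Lemma~\ref{lem:regu}, and treats this replacement as one of its contributions. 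Second, the paper's quantitative gain hinges on the ``few sums imply many products'' direction (Lemma~\ref{l:fpms} applied to the bunch structure inside Proposition~\ref{p:main}) and on feeding a Katz--Koester inclusion into a \ST-set estimate (Theorem~\ref{t:main}); neither of these specific mechanisms appears in your outline. Your closing diagnosis --- that each round of \ST plus Cauchy--Schwarz/H\"older loses a fixed power and the gains converge to a limit strictly below $\delta=1$, so that a fundamentally new incidence input or a structural dichotomy is needed --- matches the paper's own remarks after the proof of Theorem~\ref{t:sumprod} about the weakness of Theorem~\ref{t:main} and the pivotal role of the weak Erd\H os--Szemer\'edi (few products, many sums) problem.
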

	
As shorthand notation, already used in the abstract, we could instead write this as $\max\{|A+A|,|AA|\}\geq |A|^{2 -o(1)}$.

Historically, \erdos and \szem proved \cite{ES} a qualitative (but quantifiable) sum-product estimate \eqref{e:sumprodconj} with (in the notation of Conjecture~\ref{conj:sumprod}) some $\delta >0$, which Nathanson \cite{nathanson} and Ford \cite{Ford} showed to be $\delta = 1/31$ and $\delta = 1/15$ respectively. The textbook proof by Elekes \cite{Elekes} uses the geometric \ST theorem to advance to $\delta = 1/4$.  In 2008 Solymosi \cite{Solymosi} developed a different type of geometric argument to prove $\delta = 1/3-o(1)$, which stood until Konyagin and Shkredov  \cite{KS1, KS2} developed a synthetic approach, which enabled them to pass slightly beyond the value $\delta = 1/3$. Owing to technical  improvements in  \cite{RSS, Shakan}, the latter paper \cite{Shakan} by Shakan holds the current world record $\delta  = \frac{1}{3}+\frac{5}{5277}-o(1)$. We push these technical developments yet further, and streamline the arguments in this approach, aiming to identify where it may be subject to improvement.
	
	\begin{theorem}\label{t:sumprod}
	For a finite $A\subseteq \R$ one has
	\begin{equation}
	\label{eq:sumprod}
	\max\{|AA|,|A+A|\}\geq |A|^{\frac{4}{3}+\exponent -o(1)}\,.
	\end{equation} 
	\end{theorem}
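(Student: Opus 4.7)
I would argue by contradiction: assume $\max(|A+A|,|AA|) \leq |A|^{4/3+c}$ for some $c$ just below $\exponent$, and aim for a quantitative contradiction via a chain of energy inequalities. The baseline is the Solymosi inequality
\[
E^\times(A) = \sum_{\lambda\in A/A} r_{A/A}(\lambda)^2 \lesssim |A+A|^2 \log|A|,
\]
proved by ordering the slopes $\lambda\in A/A$, setting $P_\lambda = \{a\in A:\lambda a\in A\}$, and observing that consecutive rich lines $y=\lambda_i x$, $y=\lambda_{i+1}x$ yield disjoint sumset parallelograms in $(A+A)\times(A+A)$. Combined with the Cauchy--Schwarz bound $E^\times(A)\geq |A|^4/|AA|$, this already recovers the $\delta=1/3$ estimate of the previous generation of arguments.

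To go beyond $1/3$, I would dyadically pigeonhole the slopes: choose $\tau$ and a subset $\L_\tau\sbeq A/A$ with $|P_\lambda|\asymp\tau$ on $\L_\tau$, capturing (up to logarithms) the dominant contribution to $E^\times(A)$. The refined Solymosi bound is $|\L_\tau|\tau^2\lesssim |A+A|^2\log|A|$, while the concurrent pencil of $|\L_\tau|$ rich lines through the origin can be fed into a Szemer\'edi--Trotter-based incidence count, yielding an asymmetric bound on a third-moment energy of the shape $E_3^+(A)\lesssim |A+A|^\alpha|AA|^\beta|A|^\gamma$. This is then combined with Cauchy--Schwarz identities such as $E^+(A)^2\leq |A|\cdot E_3^+(A)$, the Pl\"unnecke--Ruzsa estimate $|A/A|\leq |AA|^2/|A|$, and a Balog--Wooley / Rudnev--Shakan--Shkredov partition $A=A_1\sqcup A_2$ in which $A_1$ has small additive and $A_2$ small multiplicative energy.

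The resulting system of linear inequalities in the log-variables $\log|A+A|$, $\log|AA|$, and the auxiliary $\log\tau$, together with the size parameters of $A_1,A_2$, is finally optimised (essentially a linear programme) to produce the claimed lower bound $|A|^{4/3+\exponent-o(1)}$.

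The main obstacle is not any individual inequality but the final optimisation: the specific denominator $1167$ is a consequence of balancing the refined Solymosi bound, the ST-based third-energy bound, and the Balog--Wooley partition, and the \emph{streamlining} and \emph{new observations} promised in the abstract are presumably what enable this balance to improve on Shakan's $5/5277$. The most delicate technical point is ensuring that the subsets extracted by Balog--Szemer\'edi--Gowers at each stage remain large enough to re-enter the Solymosi framework without further loss, since small losses at each dyadic layer or energy transfer compound quickly enough to erode the gain in the final constant.
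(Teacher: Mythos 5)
Your sketch starts on the right track (Solymosi's inequality, dyadic pigeonhole to $S_\tau$), but then diverges from the paper's argument and, as written, contains a genuine gap: there is no mechanism in your chain of inequalities that actually produces a gain over $\delta = 1/3$. You invoke a Balog--Wooley-type partition $A=A_1\sqcup A_2$ and a Balog--Szemer\'edi--Gowers refinement, but neither appears in this paper; indeed one of the advertised streamlinings is the replacement of the spectral/Balog--Wooley machinery by the simple iterative regularisation Lemma~\ref{lem:regu}. The ST-based $E_3$ estimate you mention is real (it sits inside Lemma~\ref{lem:estimatinge3} and drives the proof of Theorem~\ref{t:main}), but combining it with Pl\"unnecke and Cauchy--Schwarz alone does not yield anything beyond Solymosi.

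The two structural ideas you are missing are the ones that actually carry the improvement. First, the Konyagin--Shkredov \emph{bunch argument} (Proposition~\ref{p:main}): partition $S_\tau$ into consecutive bunches of $N \sim K^2 M |A|^{-1}\log|A|$ slopes and run inclusion--exclusion on vector sums. Either one already improves Solymosi's count, or on a positive proportion of bunches there are $\gg N^2\tau^2$ collisions; unwinding these shows that for most $\lambda\in S_\tau$ the fibre $A_\lambda = A\cap\lambda^{-1}A$ admits $\gg \tau^2 N^{-2}$ solutions of $a=c_1a_1-c_2a_2$, and the Elekes--Ruzsa ``few sums, many products'' estimate (Lemma~\ref{l:fpms}) then forces $\E^\times(A_\lambda)$ small, hence $|AA_\lambda|$ large. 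Second, the \emph{transfer back to $A$}: by the Katz--Koester inclusion $r_{AA/AA}(\lambda)\geq |AA_\lambda|$, so every such $\lambda$ is a popular ratio of $AA/AA$; slicing $A\times A$ by a well-chosen vertical line produces $B\subseteq A$ with $|B|\gtrsim \tau|S_\tau|/|A|$ and $r_{AA/AA}(b)\gg T$ for all $b\in B$, which is exactly the hypothesis of the ``few products, many sums''-type Theorem~\ref{t:main} (with $\Pi=AA$), lower-bounding $|B+B|\leq |A+A|$. Collecting the inequalities in $K,M,|S_\tau|,\tau$ and eliminating $|S_\tau|$ via $T\ll |AA|$ gives $K^{694}M^{473}\gtrsim |A|^{391}$, whence the exponent $\frac{1}{3}+\frac{2}{1167}$. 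Without the bunch argument, the Katz--Koester transfer, and Theorem~\ref{t:main}, the optimisation you describe has nothing nontrivial to optimise.
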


As a by-product of the techniques used in this paper we also prove a new bound on the cardinality of the set $AA+AA$. Geometrically, this is the set of dot products of pairs of vectors in the point set $A\times A\subset \R^2$. We note the the number of distinct dot products generated by a general set of points $\mathcal{P}\subseteq \R^2$, no better lower bound than $|\mathcal{P}|^{2/3}$ is known; this lower bound comes from a single application of the \ST theorem (or, in fact, a single application of the weaker Beck theorem), see \cite{IRR} for relevant discussion.

	\begin{theorem}\label{t:aaaa}
For a finite $A\subseteq \R$ one has
\[
|AA+AA|\geq |A|^{\frac{3}{2} + \frac{7}{80}-o(1)}\,.
\]
\end{theorem}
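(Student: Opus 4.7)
The plan is to use the trivial inclusion
\[
(A+A)\cdot A \;\subseteq\; AA + AA,
\]
coming from the identity $ab + cb = (a+c)b$, so it suffices to prove that $|(A+A)A| \geq |A|^{127/80 - o(1)}$.

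As a baseline, the Elekes-type incidence argument gives $|(A+A)A| \gtrsim |A|^{3/2}$: the $|A|^2$ lines $\ell_{m,b}: y = m(x+b)$, $m,b \in A$, each contain the $|A|$ points $\{(x, m(x+b)) : x\in A\} \subset A \times (A+A)A$, producing at least $|A|^3$ incidences; the \ST theorem applied to these $|A|^2$ lines and the $|A|\cdot |(A+A)A|$ points then forces $|(A+A)A| \gtrsim |A|^{3/2}$. The task is therefore to squeeze an extra $7/80$ out of this configuration.

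To that end, the plan is to import the refined-energy technology developed in this paper for Theorem~\ref{t:sumprod}. One performs a dichotomy on the multiplicative doubling of $A$: if $|AA| \geq |A|^{127/80}$ we are immediately done, since $AA \subseteq AA+AA$. Otherwise $A$ has small multiplicative doubling, and the streamlined Konyagin--Shkredov/Shakan machinery in this paper supplies non-trivial upper bounds on third additive energies of $A$ and of product-sets involving $A$. These bounds can be fed into a weighted refinement of the incidence computation above --- replacing a worst-case application of \ST by one that controls how many lines are actually $|A|^{1-\varepsilon}$-rich, in the spirit of how the same machinery upgrades the $|A|^{4/3}$ sum-product exponent --- and this promotes the exponent $3/2$ to $3/2 + 7/80$.

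The main obstacle will be the bookkeeping. The specific gain $7/80$ is the output of optimizing the multiplicative-doubling threshold against the strength of the third-energy bound in the small-$|AA|$ regime, and one has to verify that the exponents produced by the Theorem~\ref{t:sumprod} machinery remain sharp enough after passage from $A$ to the relevant product-set $(A+A)A$ to yield precisely $127/80$ rather than some weaker improvement.
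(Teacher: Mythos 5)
Your proposal reduces the problem to showing $|(A+A)A| \geq |A|^{127/80 - o(1)}$, but then leaves essentially the entire argument as a black box: ``the machinery supplies\dots bounds, which can be fed in\dots''\ with no actual derivation. The crux of the difficulty is that the gain of $7/80$ over $3/2$ is not obtained by simply plugging a third-energy bound into the Elekes incidence count. At your dichotomy threshold $|AA| = |A|^{127/80}$ the multiplicative doubling of $A$ is of size about $|A|^{0.59}$, which is very far from the regime where the paper's Lemma~\ref{lem:estimatinge3} yields strong energy estimates; with $\Pi_1=A$, $\Pi_2=A/A$, $T=|A|$, that lemma gives only $\E_3(A)\ll |A||A/A|^2\log|A|$, and there is no visible route from this to the precise quantitative claim you need about $|(A+A)A|$. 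You also never produce the computation that would pin down $127/80$ from your configuration, so the exponent appears to be transplanted rather than derived. A minor further point: $AA\subseteq AA+AA$ as written requires $0\in A$, which the paper excludes (positivity is assumed throughout); the correct statement is $|AA|\leq |AA+AA|$ by translating by a fixed element of $AA$.

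The paper's actual proof does not pass through $(A+A)A$ and does not dichotomise on $|AA|$. It proves two independent lower bounds on $|AA+AA|$ in terms of the common parameter $|A/A|$: Proposition~\ref{p:aaaa}, a Balog/Solymosi-style vector-sum argument over bunches of slopes from $A/A$, yielding $|AA+AA|^2\gg |A/A|^{2/3}|A|^{5/2}$; and Proposition~\ref{p:a(a+a)}, an energy argument built on the truism $a-c=(a+b)-(b+c)$ along the lines of Theorem~\ref{t:five}, yielding $|AA+AA|^5 \gtrsim |A|^{13}/|A/A|^5$. Optimising the two in $|A/A|$ (balance at $|A/A|=|A|^{81/80}$) gives exactly $127/80$. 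Since your outline offers a genuinely different route but supplies no computation that produces the exponent, and since it is unclear that $|(A+A)A|\geq |A|^{127/80-o(1)}$ is even accessible by the paper's methods, the proposal as written is not a proof.
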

This improves on the previously best known exponent $\frac{3}{2}+ \frac{1}{12} - o(1)$ by Iosevich, Roche-Newton and the first author \cite{IRR}.
	
Another implication of our techniques is a new bound on the number of convex sums. The real set $A=\{a_1< a_2<\dots <a_n\}$ is convex if the sequence of consecutive differences $a_{i+1}-a_i, \, i = 1,\dots, n-1$ is strictly increasing.  Without loss of generality, we have $a_i = f(i)$, for some strictly convex real smooth function $f(x)$, so we write $A=f([n])$, where $[n]:=\{1,\ldots,n\}$.

Schoen and Shkredov \cite{schoenshkredov} showed that a convex set $A$ satisfies $|A+A|\geq |A|^{14/9-o(1)}$; we improve the current best exponent $\frac{102}{65} -o(1)$ due to Olmezov \cite{Olmezov}.

\begin{theorem}\label{t:convex}
Let $A\subseteq \R$ be convex. Then
\[
|A+A|\geq  |A|^{30/19-o(1)}\,.
\]
\end{theorem}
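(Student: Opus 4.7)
The plan is to exploit the pseudo-line structure afforded by convexity. If $A = f([n])$ for a strictly convex smooth $f$, then the horizontal translates $\Gamma_t = \{(x, f(x)+t) : x \in \R\}$, with $t$ ranging over $A-A$, form a family of pseudo-lines, since any two of them meet in at most one point by strict monotonicity of $f'$. Consequently, the Szemer\'edi-Trotter theorem (in its pseudo-line formulation) can be applied to incidences between these curves and points drawn from $A\times A$, which is the source of all quantitative gains for convex sets in this line of work.

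The first technical step is to bound a higher additive energy of $A$. Write $r_{A-A}(x) = |\{(a,b)\in A^2 : a-b = x\}|$ and $E_k^+(A) = \sum_x r_{A-A}(x)^k$. I would dyadically decompose $A-A$ into popular levels $D_\Delta = \{x : r_{A-A}(x) \sim \Delta\}$ and, on each level, count quadruples by viewing each solution $a_1 - b_1 = a_2 - b_2 = x$ as an incidence between $A\times A$ and a translated copy of the graph $\Gamma_t$. The pseudo-line Szemer\'edi-Trotter bound then yields an upper bound of the form $|D_\Delta| \Delta^k \lesssim |A|^{\gamma_k + o(1)}$ for a sharp exponent $\gamma_k$; summing over dyadic $\Delta$ controls $E_k^+(A)$.

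The second step is to convert the energy estimate into a lower bound on $|A+A|$. The H\"older inequalities $E_2^+(A)^2 \leq E_3^+(A)\,|A+A|$ and $|A+A| \geq |A|^4/E_2^+(A)$ provide one route; one can also bring in higher energies $E_k^+$ and interpolate. The exponent $30/19$ should then fall out as the optimum of a small linear program whose constraints are (i) the pseudo-line incidence bound at each dyadic level, (ii) the moment inequalities between $E_k^+$'s, and (iii) trivial bounds such as $E_k^+(A) \leq |A|^{k+1}$. A Balog--Szemer\'edi--Gowers--style refinement may be required to pass from a dominant dyadic level of $r_{A-A}$ back to a global statement about $A+A$ without an exponent loss.

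The main obstacle is obtaining the sharp form of the pseudo-line incidence bound at a specific dyadic level $\Delta$, together with the combinatorial bookkeeping of the optimization. The improvement from Olmezov's $102/65$ and Schoen--Shkredov's $14/9$ to $30/19$ is quantitatively small, so the gain must come from tightening exactly the step where those arguments absorb slack --- presumably by incorporating a higher moment energy (such as $E_3^+$ or $d^+$) whose improved bound was unlocked by techniques developed earlier in the paper for Theorems~\ref{t:sumprod} and~\ref{t:aaaa}. Ensuring that these sharper estimates survive the Balog--Szemer\'edi--Gowers pass and the subsequent dyadic sum, while tracking the $o(1)$ factors coming from the Szemer\'edi--Trotter logarithms, will be the delicate part of the argument.
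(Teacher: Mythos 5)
Your overall framing is on the right track: translates of $y=f(x)$ form a pseudo-line family, the resulting Szemer\'edi--Trotter bound yields $\E_3$-type estimates, and the final exponent is won by a small optimisation. But the proposal stops short of identifying the two ideas that actually produce $30/19$, and it suggests one ingredient that the paper deliberately avoids.

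First, the energies that matter are not the single-set moments $\E_k^+(A)$ you list, but the \emph{mixed} energies $\E_3(B,B+B)$ and $\E_{3/2}(B,D)$, where $B\subseteq A$ is a regularised subset and $D$ is a popular level set of $r_{B-B}$. The convex version of Lemma~\ref{lem:estimatinge3} gives $\E_s(A,B)\ll_s |A||B|^{(s+1)/2}$ for \emph{any} $B$, and it is precisely the freedom to take $B=B+B$ or $B=D$ there that carries the gain; restricting to $\E_k^+(A)$ and the pair of H\"older inequalities $\E_2^2\le\E_3|A+A|$, $|A+A|\ge|A|^4/\E_2$ only recovers exponents in the $3/2$--$14/9$ range, not $30/19$. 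The quantitative engine is the identity $r-s=(b+r)-(b+s)$ with the restrictions $r,s\in\mathcal{R}(B)$, $r-s\in D$ (popular difference) and $b+r\in P_\eps(B)$ (popular sum), followed by an equivalence-class double count bounded by $\E_3(B)$ and a Cauchy--Schwarz/H\"older chain producing the displayed product of $\E_3(B)$, $\E_{3/2}(B,D)$ and $\E_3(B,B+B)$.

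Second, the paper does not pass through Balog--Szemer\'edi--Gowers, and in fact one of its stated contributions (Lemma~\ref{lem:regu}, the regularisation lemma) is designed to replace the spectral/BSG-style machinery. That lemma supplies a set $B\subseteq A$ with $|B|\gg|A|$ and $\E(\mathcal{R}(B))\gg\E(B)$, which is exactly what lets you impose simultaneous popularity on both the difference $r-s$ and the sum $b+r$ without an exponent loss. Your remark that a ``BSG-style refinement may be required'' signals a route that would both be unnecessary and, given the known losses in BSG, would likely forfeit the improvement you are after. So the gap is not in the bookkeeping or the linear program --- it is that the proposal lacks the regularisation device and the specific three-variable truism that together make the sharper mixed-energy bounds usable.
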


The best known exponent for the set of differences $A-A$, also due to Schoen and Shkredov \cite{schoenshkredov} is slightly better: $|A-A|\geq |A|^{8/5-o(1)}$. 
Formalising the heuristic that convexity should destroy additive structure, \erdos \cite{erdos2} conjectured the lower bound $|A\pm A|\geq |A|^{2-o(1)}$ for convex $A$.
Stronger bounds $|A\pm A|\geq |A|^{5/3-o(1)}$ were recently proven by Olmezov \cite{Ol} under additional assumptions on higher derivatives of $f$.

\subsubsection*{Organisation of paper}
The paper is arranged as follows. 
In Section~\ref{sec:discussion} we present  a non-technical outline of the proof of Theorem \ref{t:sumprod}. We follow the strategy of Konyagin and Shkredov \cite{KS1}. We owe much of our quantitative improvement by applying the forthcoming Theorem~\ref{t:main}, replacing a prototype result due to Shkredov \cite{ShkredovSums}. The remainder of our quantitative improvement is summarised in Proposition~\ref{p:main}, the proof of which we discuss in Section~\ref{sec:discussion2}.

In Section~\ref{sec:prelim} we record standard results about energy estimates together with their proofs to keep the exposition self-contained and maximally jargon-free\footnote{{\em I have always thought that clarity is a form of courtesy that the philosopher owes\dots This is different from the individual sciences which increasingly interpose between the treasure of their discoveries and the curiosity of the profane the tremendous dragon of ... terminology. } J. Ortega y Gasset, 1929, see \cite{Ort}.}.

In Section~\ref{sec:prop} we state and prove Proposition~\ref{p:main}. Similar results were proven by Konyagin and Shkredov \cite{KS1,KS2} and Shakan \cite{Shakan}. In contrast to previous literature, we study a slightly different quantity and our proof is entirely elementary.

Section~\ref{sec:proofofsumprod}contains the proof of our main sum-product result, Theorem~\ref{t:sumprod}. 

In Section~\ref{sec:proofofthm2}, we introduce a regularisation lemma, Lemma \ref{lem:regu}, which was  originally proved in \cite[Lemma 3.1]{RShakanS}; it had a more cumbersome precursor in the form of  \cite[Lemma 7.2]{MRSS}.  Lemma \ref{lem:regu} replaces the use of Shkredov's {\em spectral method} (see the foundational paper \cite{shkredov} and references therein
    ) that is typically used in this kind of argument.  Statements reminiscent of Lemma \ref{lem:regu} may be of broader interest in terms of various energy variants of the so-called Balog-Wooley decomposition \cite{BW}, used in \cite{RSS, Shakan,ShkredovRemarks}, where they would streamline lengthier arguments. We prove Theorem~\ref{t:main} and Theorem~\ref{t:convex}. 
    
Finally, Section~\ref{sec:aaaa} contains the proof of Theorem~\ref{t:aaaa}. It is only by using the argument within the proof of Theorem~\ref{t:main} that we are able to claim a new lower bound.

\renewcommand{\thefootnote}{\arabic{footnote}}

\subsubsection*{Notation}
The symbols $\ll,\gg,\sim$ suppress absolute constants in inequalities; $\lesssim,\,\gtrsim$ also  suppress powers of $\log|A|$. The Vinogradov symbol $\ll$ may acquire a subscript, say $\ll_s$ to indicate that the hidden constant depends on a parameter $s$.

We write $aA=Aa:=\{a\}A$ to denote the dilate of $A$ by $a\neq 0$, similarly $A+a$ or $a+A$ for a translate. We will use the \emph{number of realisations} notation $r_{A+B}(x):=|\{(a,b)\in A\times B\colon x = a+b\}|$ to denote the number of realisations of the number $x$ as a sum of elements from sets $A$ and $B$. Similar notation will be used for e.g. the number of realisations of $x$ as an element of $AA$, as $r_{AA}(x)$, etc.
	
\subsection{Outline of the proof of Theorem~\ref{t:sumprod}}\label{sec:discussion}
Solymosi's renowned result \cite{Solymosi} related the sumset of a set of positive reals $A$ to its multiplicative energy 
\[\E^\times(A):= \left|\left\{(a,b,c,d)\in A^4\colon \frac{a}{b} = \frac{c}{d}\right\}\right| = \left|\left\{(a,b,c,d)\in A^4\colon {a}{d} = bc\right\}\right| \,.\]
Geometrically, $\E^\times(A)$ is the sum of the numbers of pairs of points of $A\times A$ supported on lines through the origin, with slopes in $A/A$. 

To sketch Solymosi's well-known argument as a base for further build-up, consider the model case: suppose that each line through the origin with slope $\lambda\in A/A$ supports the same number of points $\tau$ of $A\times A$. i.e. in the \emph{number of realisations} notation, this is synonymous with $\forall \lambda\in A/A,\,r_{A/A}(\lambda)=\tau$. Then $\E^\times(A) = \tau^2 |A/A|$. 

Solymosi observed that taking vector sums of all pairs of points lying on pairs of lines with consecutive slopes yields distinct elements of $(A+A)\times (A+A)$: hence $|A+A|^2\geq \tau^2 |A/A| = \E^\times(A)$. 

The restrictive assumption that each $\lambda \in A/A$ has the same number number of realisations $\tau$ is dismissed  via the standard dyadic pigeonholing argument. This slightly weakens the above, to what we will refer to as {\em Solymosi's inequality}: 

\begin{equation}\label{e:sol}
\E^\times(A)\leq 4|A+A|^2\lceil \log|A|\rceil\,.
\end{equation}
If $A=[n]$, the inequality is sharp up to a constant. It follows by the Cauchy-Schwarz inequality that
\begin{equation}\label{e:csmult}
|A+A|^2 |AA| \gg |A|^4 \log^{-1}|A|\,,
\end{equation}
hence the sum-product exponent $\delta=\frac{1}{3}-o(1)$, in terms of \eqref{e:sumprodconj}. The power of $\log|A|$, hidden in the $o(1)$ term is no longer sharp in the case $A=[n]$, but nonetheless the $o(1)$ term remains, see  \cite{Ford} for its precise asymptotics.

However, if $A=[n]$, then $|AA|\geq |A|^{2-o(1)},$ and in terms of Conjecture \ref{conj:sumprod} there is nothing to prove. Assuming that $A$ has more multiplicative structure, ie. $\E^\times(A) \gg |A|^{2 + o(1)}$, Konyagin and Shkredov designed the following procedure. Let us once again assume that for every $\lambda\in A/A,\,r_{A/A}(\lambda)=\tau$. Set
\[
N:= C\frac{|A+A|^2}{\E^\times(A)}\,,\]
for a suitably large $C$. The purpose is to estimate $N$ from below in the worst possible scenario for the sum-product inequality: when $|A+A|=|AA|$. If $N\gg|A|^{3\epsilon}$, one gets the improvement $\frac{1}{3}\to \frac{1}{3}+\epsilon$ to Solymosi's value of $\delta$ in the sum-product exponent.

Partition $A/A$ by consecutive \emph{bunches} (rather than pairs) of $N>2$  consecutive slopes. Suppose that each bunch of $N\tau$ points contributes $\gg N^2\tau^2$ distinct vector sums, rather than $N\tau^2$ as Solymosi's estimate counts. This contradicts the definition of $N$, leading to $C\ll 1$. Hence, there are many collisions between pairwise vector sums within a bunch and each vector sum is attained (as a sum of two points lying  on distinct slopes within the same bunch)  roughly $N$ times.

Interpreting this in terms of vector sums leads us to an algebraic conclusion: for most of the slopes $\lambda \in A/A$, there are $\gg \tau^2 N^{-2}$ solutions to the equation
$a = a_1 + a_2$
where $a\in A_\lambda:= A\cap \lambda^{-1} A$ (i.e.,  $a$ is an $x$-coordinate of a point in $A\times A$ lying on the line through the origin with slope $\lambda$), and $a_1,a_2$ lie respectively some dilates of some $A_{\lambda_1},\,A_{\lambda_2}$, with $\lambda_{1},\lambda_{2}$ coming from the same bunch as $\lambda$. By construction, each variable $a,a_1,a_2$ runs through $\tau$ values, so that the number of solutions to the above equation is nearly maximum possible when $N$ is small.

Upon this conclusion Konyagin and Shkredov took advantage of a maxim of Elekes and Ruzsa \cite{ER}, that {\em few sums imply many products}. This required generalising the approach of \cite{ER} and was achieved with increasing efficiency in \cite{KS1, KS2, RSS, Shakan}. Here we present a lucid and self-contained version of the argument, aiming at minimum auxiliary notation. The analysis is presented within the proof of forthcoming Proposition \ref{p:main}, whose key conclusion is that, under the scenario in consideration, sets $A_\lambda$ must have small multiplicative energy.

This implies that the product sets $A_\lambda A_\lambda, AA_\lambda$ are quite large. Our somewhat different numerology allows us the additional new benefit of using the latter product set. By the truism that Konyagin and Shkredov call the Katz and Koester \cite{KK}  inclusion, $A A_\lambda$ being large means that $\lambda$ has at least $|AA_\lambda|$ realisations as a ratio from $AA/AA$. By slicing $A\times A$ with a vertical line, a subset of roughly $\gg |A|$ such slopes $\lambda$ can be identified with a subset of $A$, each member of which has many representations as a ratio from $AA/AA$. Shkredov \cite{ShkredovSums} called such sets \ST sets and proved that they have fairly large sumsets \cite[Theorem 11]{ShkredovSums}.  

We avoid following Shkredov's rather general line of notation apropos of the  \ST sets (see also e.g. \cite{ShkredovRemarks}, this notation was adopted by Shakan in \cite{Shakan}). Instead we spell out the suitable (and  stronger) estimate in Theorem \ref{t:main}, which yields a lower bound on $N$,  thus concluding the proof of Theorem \ref{t:sumprod}.

    \begin{theorem} \label{t:main} 
    Let $A,\Pi \subset \R\backslash\{0\}$ be finite, with  $|\Pi|\geq |A|$ and $r_{\Pi/\Pi}(a)\geq T$ for all $a\in A$, for some $T\geq 1$. 
    
    Then 
    \[
	    |A+A|^{19} |\Pi|^{44} \geq |A|^{41-o(1)} T^{33}\,.
    \]
    \end{theorem}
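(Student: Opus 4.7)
The plan is to combine the Szemer\'edi-Trotter theorem, applied to an incidence configuration that encodes the hypothesis $r_{\Pi/\Pi}(a) \geq T$, with a Cauchy-Schwarz lower bound on the additive energy of $A$, and to optimise over the relevant dyadic scales.

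Geometrically, each $a \in A$ corresponds to a line $\ell_a\colon y = ax$ through the origin, and $\ell_a$ supports at least $T$ points of the grid $\Pi \times \Pi$. A direct application of Szemer\'edi-Trotter to the pencil $\{\ell_a\colon a \in A\}$ already yields the baseline estimate $|A|T^3 \lesssim |\Pi|^4$, but it does not involve $|A+A|$. To introduce $|A+A|$ one brings in the additive energy $\E^+(A) := |\{(a_1,a_2,a_3,a_4) \in A^4 \colon a_1+a_2=a_3+a_4\}|$, for which Cauchy-Schwarz yields $\E^+(A) \geq |A|^4/|A+A|$.

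The core step is to upper bound $\E^+(A)$ using the geometric hypothesis. Each additive quadruple $(a_1,a_2,a_3,a_4)$ lifts, via the $\geq T^4$ choices of representation $a_i = \pi_i/\pi_i'$, to a solution of the algebraic equation obtained from $a_1+a_2 = a_3+a_4$ by clearing denominators, namely $\pi_1 \pi_2' \pi_3' \pi_4' + \pi_2 \pi_1' \pi_3' \pi_4' = \pi_3 \pi_1' \pi_2' \pi_4' + \pi_4 \pi_1' \pi_2' \pi_3'$ in $\Pi^8$. After a dyadic regularisation restricting attention to a level set $r_{\Pi/\Pi}(a) \sim \tau$ with $\tau \geq T$, counting produces $\tau^4 \E^+(A) \lesssim N$, where $N$ is the number of such 8-tuples. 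Fixing appropriate blocks of variables reorganises $N$ as a point-line incidence count inside $\Pi \times \Pi$, to which Szemer\'edi-Trotter applies, yielding an upper bound on $N$ purely in terms of $|\Pi|$ and $\tau$.

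Matching the resulting upper bound against the Cauchy-Schwarz lower bound on $\E^+(A)$, then optimising over $\tau$ and the crossover between the two regimes of Szemer\'edi-Trotter, should produce an inequality of the shape stated. I expect the main obstacle to be the numerology: the specific exponents $(19, 44, 41, 33)$ suggest that a single pass of Cauchy-Schwarz followed by a single application of Szemer\'edi-Trotter is insufficient, and that one likely has to pass through a higher moment energy such as $\E_3^+(A) := \sum_s r_{A+A}(s)^3$, or iterate with Pl\"unnecke-Ruzsa bounds on auxiliary $\Pi$-derived sets (for example $\Pi\Pi$ or $\Pi/\Pi$), in order to reach the claimed exponent rather than a strictly weaker one.
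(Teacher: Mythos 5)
Your overall plan---lower bound $\E^{+}(A)$ by Cauchy--Schwarz, upper bound it via Szemer\'edi--Trotter using the lift $a=\pi/\pi'$---is the natural first attempt, but as written it cannot reach the stated exponents, and the reason is structural, not just a matter of numerology. Bounding $\E_2(A)=\E^{+}(A)$ by ST (via Lemma~\ref{lem:estimatinge3} with $s=2$) gives $\E_2(A)\ll |\Pi|^2|A|^2T^{-3/2}$, whence $|A+A|\geq |A|^4/\E_2(A)\gg |A|^2 T^{3/2}/|\Pi|^2$. In the model case $\Pi\approx A$, $T\approx |A|$ this is only $|A+A|\gtrsim |A|^{3/2}$, strictly weaker than the $|A|^{30/19}$ that Theorem~\ref{t:main} yields in the same regime. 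The underlying issue, flagged in Section~\ref{sec:discussion2} of the paper, is that the ST bound on $\E_3$ is essentially sharp while the induced bound on $\E_2$ is not; any route that funnels everything through $\E_2(A)\geq |A|^4/|A+A|$ and a single ST pass will lose. Your lift to a degree-$8$ equation in $\Pi^8$ does not repair this: after fixing blocks of variables one is left with quadric constraints rather than point--line incidences on a Cartesian grid, and there is no clean way to cast it as an input to Lemma~\ref{lem:stest} that beats the $s=2$ moment estimate above.

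You correctly intuit that $\E_3$ must enter, but the way the paper makes it enter is very different from ``bound $\E_3$ and relate to $|A+A|$''. The actual proof (of Theorem~\ref{t:five}, which contains Theorem~\ref{t:main}) first applies the regularisation Lemma~\ref{lem:regu} to produce a subset $B\subseteq A$ with $|B|\gg|A|$ for which the set $\mathcal{R}(B)$ of elements having many \emph{popular} sums still carries a constant fraction of $\E(B)$. One then considers the truism $r-s=(b+r)-(b+s)$ restricted so that $r-s$ is a popular difference of $\mathcal{R}(B)$ (lying in a dyadic set $D$ with multiplicity $\sim\Delta$) and $b+r$ is a popular sum in $P_\eps(B)$. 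The $\gg|B||D|\Delta$ solutions are partitioned by the equivalence $(r,s,b)\sim(r+t,s+t,b-t)$, and the number of pairs of equivalent triples is bounded by $\E_3(B)$---this is where the sharp ST moment appears. Cauchy--Schwarz then gives \eqref{e:thm2longtop}, the $d=x-y$ count is bounded via the popularity of $P_\eps(B)$, H\"older, and the $\E_{3/2}$ and $\E_3$ estimates \eqref{e:e3/2(b,x)}, \eqref{e:e3(b,x)}, and a separate upper bound on $\Delta$ via \eqref{e:bdone} closes the system. None of this is a ``single pass of Cauchy--Schwarz plus one ST application'', and no Pl\"unnecke--Ruzsa iteration on $\Pi\Pi$ or $\Pi/\Pi$ is involved. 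The concrete gap in your proposal is the absence of the popular-difference/popular-sum restriction and the equivalence-class device that converts the count into an $\E_3$ bound; without those, the best you can extract is the $\E_2$ exponent $3/2$.
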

		

With $\Pi=AA$, the analogous inequality used by Konyagin, Shkredov and others, in  e.g. \cite{KS1, KS2, RSS, Shakan} was
	\[
	|A+A|\geq |A|^{58/37-o(1)}\cdot  \left(d_+(A):= \frac{|AA|^4}{T^3|A|}\right)^{-21/37}\;\;\Rightarrow\;\; |A+A|^{37}|AA|^{84} \geq |A|^{79-o(1)}T^{63}\,.
	\]
	The estimate for $\max\{|AA|,\,|A+A|\}$ given by Theorem \ref{t:main} is better whenever $|A|^{-16} T^{24} >1$. In the context of the implementation of the Konyagin-Shkredov strategy this is indeed the case, as one roughly has $T\sim |A|^{4/3}$.

\medskip
\subsection{Outline of the proof of Theorem \ref{t:main}}\label{sec:discussion2}
For finite subsets $A, B$ of an additive group and a real $s\geq 1$, define the $s$-th energy of $A$ and $B$:
	\[
		\E_s(A,B) := \sum_x r^s_{A-B}(x)\,,
	\]
	where  $r_{A-B}(x)$ is the number of realisations of the difference $x$. If $B=A,$ we write $\E_s(A)$ and if $s = 2$ we write $\E(A,B)$. In the special case $s=2$ we can replace instances of addition with subtraction. In a multiplicative group this definition coincides exactly with the multiplicative energy $\E^\times(A)$ defined above, the notation $\E$ for energy with respect to addition in $\R$ bears no superscript.

Energy, as the number of solutions of one equation of several variables, can be estimated from above via incidence theorems. For reals, this is first and foremost the (generally sharp) Szemer\'edi-Trotter theorem \cite{SzT}, bounding the number of incidences between a set of points and a set of lines (or curves that ``behave like lines") in $\R^2$. In this paper, the point set is always a Cartesian product and so the  Szemer\'edi-Trotter incidence bound can be derived using only elementary arguments. Namely, the order-based, elementary double counting ``lucky pairs'' argument, that we first encountered in the paper by Solymosi and Tardos \cite{ST}. 

For a set $A$, which is convex or is such that $|AA|\ll|A|$ and any set $B$, the energy  $\E_3(A,B)$ can be 
bounded as $|A|^{-1}$ times the number of collinear triples in the set $A\times B$, which the  \ST theorem bounds sharply (up to constants) as $|A|^2|B|^2 \log |A|$. In view of this, the third (or cubic) energy $\E_3(A,B)$  has played a key role in the strongest known sum-product type results over the reals, see e.g. \cite{MRSS,OSS}, as well as convex set bounds \cite{schoenshkredov, Ol}.

Consider the following truism on triples $(a,b,c)\in A^3$: \begin{equation}\label{e:truismintro}
	b-c=(a+b)-(a+c)\,.
	\end{equation}
This equation can be rewritten as $d= x-y$, where $d\in A-A$, $x,y\in A+A$. One defines an equivalence relation on triples $(a,b,c)$ yielding the same $(d,x,y)$: this happens if and only if one adds the same $t\in \R$ to $b,c$ and subtracts the  number $t$ from $a$. If $\sigma$ denotes an equivalence class containing $r(\sigma)$ triples $(a,b,c)\in A^3$, it is easy to bound
\[
\sum_\sigma r^2(\sigma) \leq \E_3(A)\,.
\]
There are $ |A|^3$ solutions $(a,b,c)$  to \eqref{e:truismintro}, yet we will later impose some restrictions on $b-c, a+b$. On the other hand, we can use the Cauchy-Schwarz inequality to relate the number of solutions to \eqref{e:truismintro} to the product of $\sum_\sigma r^2(\sigma)$ and the number of solutions to $d=x-y$.  We use the \ST theorem and the H\"older inequality to get an upper bound on the number of solutions of the  equation $d= x-y$.

This approach (founded in a series of works by Shkredov, see e.g. \cite{shkredov}) has been the key strategy in proving the recent {\em few products, many sums} results over the reals, towards the so-called {\em weak \ES conjecture} over $\R$. The weak \ES conjecture claims, roughly  that if $|AA| \to |A|$, then $|A+A| \to |A|^{2-o(1)}$, with the parameter dependence hidden in $\to$ being polynomial. This is a {\em few products, many sums} situation.  In contrast to the converse {\em few sums, many products} scenario by Elekes and Ruzsa, the former question is wide open, the best known results can be found in \cite{OSS}, and states that when $|AA|\to |A|$, then $|A+A|\to |A|^{8/5-o(1)}\,.$

The problem is that the equation  $d= x-y$, where $d\in A-A$, $x,y\in A+A$ involves unavoidably the differences from $A$, which can be generally related to sums only via  the additive energy  $\E(A)$. This forces one to restrict $d$ (as well as the quantities $x,y$ for the purpose of being able to prove good upper bounds) to some popular subsets of $A-A$ and $A+A$. This makes the set of $(a,b,c),$ on which the truism  \eqref{e:truismintro} is considered thinner, undermining the lower bound on the number of solutions of the equation $d= x-y$. 
	
Shkredov's spectral method, see e.g.  \cite{shkredov, ShkredovRemarks, MRSS, OSS} successfully provides lower bounds, involving restricted subsets of differences and sums,  by extending the equation $d=x-y$ to
	\begin{equation}\label{e:ks_fundamental}
		\alpha-\beta = d = x-y\,,
	\end{equation}
	with the additional variables $\alpha,\beta \in A$.
However, this creates additional challenges for proving upper bounds on the number of solutions. The key element of the proof of Theorem \ref{t:main} is the use of Lemma \ref{lem:regu} instead of the spectral method. This enables us to avoid  \eqref{e:ks_fundamental}, and deal instead with the equation $d=x-y$, where we can provide both suitable lower and upper bounds for the number of solutions, under the required popularity assumptions on the quantities $d,x,y$. 
However, we know no way to do without the spectral method for estimating $\E(A)$ for $A$ with small multiplicative doubling \cite{MRSS, OSS}.

\section{Preliminaries}\label{sec:prelim}
The lemmata in this section present the (elementary) version of the Szemer\'edi-Trotter theorem we need and show how it is used to furnish energy estimates.

\begin{lemma}\label{lem:stest}
Let $A,B\subset \R$ be finite sets and $L$ a set of affine lines or translates of a strictly convex curve $y=f(x)$. The number of incidences between the point set $A\times B$ with $L$ is\footnote{The term $|L|$ in this estimate can be written more precisely as $|\{l\in L:\,|l\cap (A\times B)|=1\}|.$}  
$~{\displaystyle O\left( (|A||B||L|)^{\frac{2}{3}}+|L| \right).}$

In particular, for $k\geq 2$ the number of lines (curves) with $\geq k$ points is ${\displaystyle O\left(\frac{(|A||B|)^2}{k^3}\right)\,.}$
 \end{lemma}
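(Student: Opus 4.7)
The two bounds in the lemma are equivalent up to dyadic summation, so the plan is to establish the rich-lines statement: for $k \geq 2$, at most $O((|A||B|)^2/k^3)$ lines (or translates of $y=f(x)$) contain $\geq k$ points of $A \times B$. Pigeonhole dyadically to a class of $m$ rich lines each with incidence count between $k$ and $2k$; the goal is $mk^3 \ll (|A||B|)^2$.

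On each such line, order the incidences by $x$-coordinate. For lines this is immediate; for translates of the strictly convex $f$, any two distinct translates meet at most once (since $f(x-a_1)-f(x-a_2)$ is strictly monotonic in $x$ by strict convexity of $f'$), so the $x$-coordinates of incidences on a single translate are distinct and totally ordered. Select the $\sim k$ adjacent triples of three consecutive incidences per line, yielding $\sim mk$ adjacent triples globally, each of the form $((a_1,b_1),(a_2,b_2),(a_3,b_3))$ with $a_1<a_2<a_3$ in $A$ and $b_i\in B$. By the line/curve equation, $b_2$ is determined once the outer pair and the middle $x$-coordinate are fixed.

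A telescoping observation: on each line, the column spans $i_3-i_1$ (index differences of outer columns in the sorted order of $A$) over consecutive adjacent triples sum to at most $2|A|$. By averaging, at least half of the adjacent triples have span at most $4|A|/k$. For triples of span $\leq S$, the number of column profiles $(a_1,a_2,a_3)$ is at most $|A|S^2$, and each profile contributes at most $|B|^2$ triples (free choice of $(b_1,b_3)$, then $b_2$ determined). With $S\sim |A|/k$ this yields $mk \ll |A|(|A|/k)^2|B|^2$, hence $mk^3 \ll |A|^3|B|^2$; the symmetric argument ordering incidences by $y$-coordinate gives $mk^3 \ll |A|^2|B|^3$. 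To upgrade to the symmetric bound $mk^3 \ll (|A||B|)^2$ claimed in the lemma, one applies the \emph{lucky pairs} double-count of Solymosi--Tardos: pair adjacent increments on different lines sharing a common column (or row) pair, and invoke Cauchy--Schwarz on these paired counts to absorb the extra factor. This symmetrization is the main technical step, and I expect it to be the chief obstacle: the asymmetric argument above already exhibits the correct mechanism and the sharp power $k^{-3}$, but loses a factor in one of the two dimensions that only the bilateral lucky-pairs accounting recovers.
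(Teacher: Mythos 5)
The paper does not supply a proof of Lemma~\ref{lem:stest}; it points to Solymosi--Tardos~\cite{ST} and remarks that the same argument extends to translates of a convex curve, so the comparison is with~\cite{ST}, and there your proposal has a genuine gap. Your reduction to the rich-curve count and the dyadic summation back to the incidence bound are fine, as is the observation that two points with distinct $x$-coordinates lie on at most one translate of a strictly convex graph (what is used is strict monotonicity of $f'$, i.e.\ strict convexity of $f$, not ``strict convexity of $f'$''). But the triple-based count with only a horizontal-span constraint gives, as you concede, only $mk^3\ll|A|^3|B|^2$, and the Cauchy--Schwarz ``symmetrisation'' you defer to does not repair this: pairing adjacent triples that share an outer column pair $(a_1,a_3)$ and applying Cauchy--Schwarz leaves exactly the same surplus factor $\sim|A|/k$, which comes from the free choice of the middle column $a_2$ in each triple and is insensitive to how triples are re-paired across curves.

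The actual ``lucky pairs'' count in~\cite{ST} uses consecutive \emph{pairs}, not triples, and imposes span constraints in \emph{both} coordinates simultaneously. Along a $k$-rich line of finite nonzero slope, or a $k$-rich translate of the convex graph, the $y$-coordinates of the incidences taken in $x$-order are monotone (for a translate, after splitting into at most two monotone arcs), so the vertical index-spans of consecutive pairs also telescope to $O(|B|)$, just as the horizontal spans telescope to $O(|A|)$. Two Markov bounds and a union bound then give $\gg k$ consecutive pairs per curve whose horizontal span is $O(|A|/k)$ \emph{and} whose vertical span is $O(|B|/k)$. Since a pair of distinct points with distinct $x$-coordinates determines at most one curve through it, the total number of such lucky pairs over all $m$ rich curves is bounded by the number of $4$-tuples $(a,a',b,b')$ obeying both span constraints, namely $O(|A|^2|B|^2/k^2)$; comparing with the lower bound $\gg mk$ yields $mk^3\ll(|A||B|)^2$ directly and with no Cauchy--Schwarz. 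What your write-up is missing is the vertical telescoping and the passage from triples to pairs: the extra middle column in the triple is precisely the source of the spurious factor, and no double count on triples removes it.
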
 

Note that affine lines have finite nonzero slopes.  For an elementary ``lucky pairs'' proof  of Lemma \ref{lem:stest} when $L$ is the set of affine lines see \cite{ST}. The same proof works for translates of a convex curve.

The next two lemmata collect the bounds we need, based on Lemma \ref{lem:stest}.

\begin{lemma}\label{lem:estimatinge3}
Let $A, B, C,\, \Pi_1,\Pi_2\subset \R$ be finite sets with  the property that $r_{\Pi_1\Pi_2}(a)\geq T$ for all $a\in A$ and some $T\geq 1$.
Then if $|\Pi_1||C|\leq |\Pi_2|^2|B|^2,$
\begin{equation} \label{e:bdone}
|\{c=a-b;\,a\in A, b\in B,\,c\in C\}| \ll \frac{(|\Pi_1||\Pi_2||B||C|)^{2/3}}{T}\,.
\end{equation}
Besides, if $|\Pi_1||A|\leq |\Pi_2|^2|B|,$
\begin{equation}\label{e:e3(b,x)}
\E_3(A,B)\ll\frac{|B|^2|\Pi_1|^2|\Pi_2|^2\log|A|}{T^3}
\end{equation}
and for $s \in (1,3)$
\begin{equation}\label{e:e3/2(b,x)}
\E_s (A,B)\ll_s\frac{(|\Pi_1| |\Pi_2|)^{s-1}|B|^{\frac{s+1}{2}}|A|^{\frac{3-s}{2}}}{T^{\frac{3(s-1)}{2}}}\,.
\end{equation}

Furthermore, if $A$ is convex, then if $|C|\leq |A||B|^2$,
$$
|\{c=a-b;\,a\in A, b\in B,\,c\in C\}| \ll |A|^{1/3}(||B||C|)^{2/3}\,.
$$ and
for $s \in (1,3)$
\[
\E_s(A,B)\ll_s |A||B|^{\frac{s+1}{2}}\,, \qquad \E_3(A,B)\ll |A||B|^2\log|A| \,.
\]
\end{lemma}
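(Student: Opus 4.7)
The proof runs along a common template and splits naturally by hypothesis.

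First, for \eqref{e:bdone}, I would exploit the $T$-fold multiplicative structure by lifting each triple $(a,b,c)$ with $c=a-b$ to at least $T$ quintuples $(a,b,c,\pi_1,\pi_2)$ satisfying $a=\pi_1\pi_2$ (equivalently, $c=\pi_1\pi_2-b$). Reinterpreting this relation as the incidence of the point $(\pi_2,b)\in\Pi_2\times B$ with the line $y=\pi_1 x-c$ indexed by $(\pi_1,c)\in\Pi_1\times C$, Lemma~\ref{lem:stest} applied to the $|\Pi_2||B|$ points and $|\Pi_1||C|$ lines gives
\[
T\cdot|\{c=a-b\}|\ll (|\Pi_1||\Pi_2||B||C|)^{2/3}+|\Pi_1||C|.
\]
The hypothesis $|\Pi_1||C|\leq|\Pi_2|^2|B|^2$ is exactly the condition for the first term to dominate the second, and dividing by $T$ yields \eqref{e:bdone}.

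Next, for the energy estimates I would pigeonhole dyadically on popularity. For each dyadic $k$ set $D_k:=\{d:r_{A-B}(d)\sim k\}$; the identity $\sum_d r_{A-B}(d)=|A||B|$ forces $|D_k|\leq |A||B|/k$, and combined with $|\Pi_1||A|\leq|\Pi_2|^2|B|$ this verifies the hypothesis of \eqref{e:bdone} at every scale. Applying \eqref{e:bdone} with $C=D_k$ and cubing gives $k^3|D_k|\ll (|\Pi_1||\Pi_2||B|)^2/T^3$; summing over the $O(\log|A|)$ relevant dyadic scales produces \eqref{e:e3(b,x)}. For $s\in(1,3)$ I would interpolate via
\[
k^s|D_k|\leq (k^3|D_k|)^{(s-1)/2}(k|D_k|)^{(3-s)/2}\leq \left(\frac{(|\Pi_1||\Pi_2||B|)^2}{T^3}\right)^{(s-1)/2}(|A||B|)^{(3-s)/2},
\]
which unpacks to exactly \eqref{e:e3/2(b,x)}; the dyadic sum is geometric on each side of its peak when $s\neq 1$, so it is controlled by the peak up to a constant $\ll_s$.

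For the convex case I would run the same template, trading the product factorisation $a=\pi_1\pi_2$ for the lift $a=f(i)$, $i\in[n]$. The incidence setup takes the point set $[n]\times B$ and the curves $\{(i,b):b=f(i)-c\}$ indexed by $c\in C$, which are vertical translates of the strictly convex graph of $f$. Lemma~\ref{lem:stest} then supplies the incidence bound, and the dyadic plus interpolation machinery of the multiplicative case transfers verbatim, with the stated $\E_s$ and $\E_3$ bounds ultimately resting on the cubed inequality $k^3|D_k|\ll|A||B|^2$.

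The main obstacle is matching the exponent $|A|^{1/3}$ in the convex triple bound: a direct application of Lemma~\ref{lem:stest} to the $|A||B|$ points and $|C|$ curves gives only the weaker $(|A||B||C|)^{2/3}$. Closing this $|A|^{1/3}$ gap appears to require a more delicate dyadic-on-$r_{A-B}$ analysis, invoking the \emph{second} part of Lemma~\ref{lem:stest} (on the number of rich curves) and combining with the trivial popularity bound $|D_k|\leq |A||B|/k$ before extracting the triple count, in the style of the convex-set arguments of Iosevich--Konyagin--Rudnev--Ten and Schoen--Shkredov.
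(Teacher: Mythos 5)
Your treatment of the multiplicative case is essentially the paper's proof. Writing $c = \pi_1\pi_2 - b$ and reading it as an incidence of the point $(\pi_2,b)$ with the line $y=\pi_1 x - c$ is exactly how \eqref{e:bdone} is obtained, and the dyadic bound $k^3|D_k|\ll(|\Pi_1||\Pi_2||B|)^2 T^{-3}$ together with the trivial $k|D_k|\le|A||B|$ then give \eqref{e:e3(b,x)} and \eqref{e:e3/2(b,x)}. One minor caveat on \eqref{e:e3/2(b,x)}: the interpolation $k^s|D_k|\le(k^3|D_k|)^{(s-1)/2}(k|D_k|)^{(3-s)/2}$ bounds \emph{every} dyadic scale by the same quantity, so on its own it produces an extra $\log$; you do need, as you note in passing, to use the two one-sided bounds separately to see geometric growth below the peak (from $k^{s-1}|A||B|$) and geometric decay above it (from $k^{s-3}(|\Pi_1||\Pi_2||B|)^2T^{-3}$, using $s<3$). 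The paper instead truncates at a single optimised threshold $k$, which is cleaner but mathematically equivalent.

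The convex case is where there is a genuine gap, and it is larger than you indicate. You correctly observe that the naive setup (points $[n]\times B$, curves $y=f(x)-c$, one per $c\in C$) only yields $(|A||B||C|)^{2/3}$. But the same deficiency propagates to the dyadic bound: that setup gives $k^3|D_k|\ll(|A||B|)^2$, not the $k^3|D_k|\ll|A||B|^2$ you assert, so your claimed energy estimates for convex $A$ do \emph{not} follow ``verbatim'' from the multiplicative template. The paper's fix is an inflation trick you did not find: given one incidence $c=f(i)-b$, rewrite it as $c=f\big((i+j)-j\big)-b$ for each admissible shift $j$ (roughly $|A|/2$ choices, say for $i\le|A|/2$, with the symmetric treatment otherwise). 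This multiplies the incidence count by $\sim|A|$, replaces the single curve $y=f(x)-c$ by the family of $\sim|A|$ translates $y=f(x-j)-c$, and takes points in $[|A|]\times B$. Szemer\'edi--Trotter then gives $\sim|A|\cdot N \ll \big(|A||B|\cdot|A||C|\big)^{2/3}$, and dividing through by $|A|$ produces exactly $N\ll|A|^{1/3}(|B||C|)^{2/3}$ (and, with $C=D_k$, the sharper $|D_k|\ll|A||B|^2/k^3$). Your alternative suggestion of a more delicate rich-curves analysis without this rewriting is not what the paper does and does not obviously recover the missing $|A|^{1/3}$.
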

\begin{proof} Note that the cardinality relations between the sets involved serve only for one to be able to disregard the trivial term $|L|$ in the \ST incidence estimate of Lemma  \ref{lem:stest}. Without loss of generality $|\Pi_1|\leq |\Pi_2|$.

To prove \eqref{e:bdone} observe that 
\begin{equation}\label{e:long} \begin{aligned}
|\{c=a-b;\,a\in A, b\in B,c\in C\}| & \leq T^{-1} |\{c=pq-b;\,p\in \Pi_1,q\in \Pi_2, b\in B, c\in C\}| \\ & \ll
T^{-1}[(|C||B||\Pi_1||\Pi_2|)^{2/3}  + |\Pi_1||C|] \\  & \ll T^{-1} (|C||B||\Pi_1||\Pi_2|)^{2/3} ,\,
\end{aligned}
\end{equation}
by Lemma \ref{lem:stest} and the assumptions on set cardinalities.

Furthermore, for an integer $k \in [1, \min(|A|,|B|)]$ define
\[
D_k :=\{d \in A-B:\,r_{A-B}(x)\geq k\}\,
\]
as the set of $k$-popular differences, clearly $|D_k|\leq |A||B|$.

By definition of $D_k$, as in \eqref{e:long}, with $D_k$ as $C$ in its right-hand side we have
\begin{align*}
|D_k|k  &\leq \frac1{T}|\{d=pq-b\colon d\in D_k,p\in \Pi_1,q\in \Pi_2,b\in B\}|\\
&\ll\frac1{T} (|D_k||B||\Pi_1||\Pi_2|)^{2/3}\,.
\end{align*}
Hence
\begin{equation} \label{e:dk}
|D_k| \ll \frac{ (|B||\Pi_1||\Pi_2|)^{2}}{(kT)^3}\,.
\end{equation}

Then bound \eqref{e:e3(b,x)} follows after dyadic summation in $k=2^j$, namely 
$$
\E_3(A,B) \ll \sum_{1\leq j \ll \log|A|} (2^{3j}) |D_{2^j}| \ll \frac{|B|^2|\Pi_1|^2|\Pi_2|^2\log|A|}{T^3},
$$
as claimed.

Similarly, for $1<s<3$ dyadic summation leads, for any $k \in [1,\min(|A|,|B|)]$, to the bound
$$
\sum_{x\in A-B:\,r_{A-B} (x) \geq k} r^s_{A-B}(x) \ll_s \frac{1}{k^{3-s}} \frac{|B|^2|\Pi_1|^2|\Pi_2|^2}{T^3}\,,
$$
where the hidden constant depends on $s$.

The remaining counterpart of $\E_s(A,B)$ is
$$\sum_{x\in A-B:\,r_{A-B} (x) < k} r^s_{A-B} (x) \leq k^{s-1} |A||B|\,.$$
Optimising the two latter bounds by choosing
$$
k = |\Pi_1| |\Pi_2| \sqrt{ \frac{|B|}{|A|T^3}}
$$ 
completes the proof of inequality \eqref{e:e3/2(b,x)}.

\medskip
For a convex $A=f([|A|])$ we want to show that same bounds as \eqref{e:e3(b,x)}, \eqref{e:e3/2(b,x)}  hold if one formally replaces $|\Pi_1|=|\Pi_2|=T=|A|$. Let us use the same notation $D_k$ as above, writing for a single representation of each of its element $d$ in $\geq \lfloor |A|/2\rfloor$ ways
\[
d = f(i) - b_i = f(i+j - j) - b_i = f(l - j) - b_i\,.
\]
(Without loss of generality we have assumed $i\leq |A|/2$: otherwise we wold do $i=(i-j) + j$, which leads to the same estimate using Lemma \ref{lem:stest}.)
Hence, $k|D_k|$ is bounded from above via $|A|^{-1}$ times the number of incidences between the point set $[|A|]\times B$ and $|A||D_k|$ translates of the curve $y=f(x)$. Applying Lemma \ref{lem:stest} yields, with no constraints on $B$
$$
|D_k|\ll \frac{|A||B|^2}{k^3}\,.
$$ The claimed energy  bounds follow similar to  \eqref{e:e3(b,x)}, \eqref{e:e3/2(b,x)}.
\end{proof}

Let us finally include the aforementioned {\em few sums, many products} estimate. The following lemma is in essence a restatement of \cite[Lemma 2.5]{VarI}. We provide a slightly shorter proof, which easily generalises for point sets that are not Cartesian products -- see the remark following the proof.
\begin{lemma}\label{l:fpms}
Let $A\subset \R\setminus\{0\}$ and $\Pi_1,\Pi_2\subset \R$ be finite sets with $|\Pi_2|,\, |\Pi_1|\, \geq \, |A|,$ and the property that $r_{\Pi_1- \Pi_2}(a)\geq T$ for all $a\in A$ and some $T\geq 1$.

Then
\begin{equation}\label{e:e2}
\E^\times (A) \ll \frac{ |\Pi_1|^{3}|\Pi_2|^{3} \log|\Pi_1| }{T^4}\,.
\end{equation}
\end{lemma}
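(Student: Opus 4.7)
The plan is to combine a dyadic pigeonhole on the multiplicative energy with the $T$-popularity hypothesis and a Szemer\'edi--Trotter bound from Lemma~\ref{lem:stest}. First, dyadic pigeonholing on $\E^\times(A) = \sum_\lambda r_{A/A}(\lambda)^2$ selects a level $\tau \geq 1$ and a popular slope set $S_\tau := \{\lambda : r_{A/A}(\lambda) \in [\tau, 2\tau)\}$ such that $\E^\times(A) \lesssim \tau^2 |S_\tau|$.

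Next, I would lift using the hypothesis. For each $\lambda \in S_\tau$ there are at least $\tau$ pairs $(a,b) \in A^2$ with $a = \lambda b$; substituting each of $a,b$ through its $\geq T$ representations as $p-q$ with $(p,q) \in \Pi_1 \times \Pi_2$ yields at least $T^2\tau$ quadruples $(p,q,p',q')$ satisfying $p - q = \lambda (p' - q')$. Denoting this count by $U_\lambda$, summation produces
\[
|S_\tau|\, \tau T^2 \;\leq\; \sum_{\lambda \in S_\tau} U_\lambda.
\]

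The crux is to upper bound $\sum_\lambda U_\lambda$ via Lemma~\ref{lem:stest}. Rewriting the constraint as $p = \lambda p' + (q - \lambda q')$, view the sum as an incidence count between the Cartesian point set $\Pi_1 \times \Pi_1$ (variables $(p',p)$) and the family of affine lines parameterized by $(\lambda, q, q') \in S_\tau \times \Pi_2^2$. Distinct slopes give distinct lines, but for fixed $\lambda$ different $(q,q')$ may yield the same intercept $c = q - \lambda q'$, with multiplicity $r_{\Pi_2 - \lambda \Pi_2}(c) \leq |\Pi_2|$. A dyadic decomposition of this multiplicity, with Lemma~\ref{lem:stest} applied inside each dyadic class, yields
\[
\sum_{\lambda \in S_\tau} U_\lambda \;\lesssim\; (|\Pi_1|^2 |\Pi_2|^2 |S_\tau|)^{2/3} + |\Pi_1|^2 |\Pi_2| + |S_\tau| |\Pi_2|^2,
\]
up to a factor of $\log|\Pi_1|$.

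Finally, combining this with the trivial $|S_\tau|\tau \leq |A|^2$ (from $\sum_\lambda r_{A/A}(\lambda) = |A|^2$) and the hypothesis $|A| T \leq |\Pi_1||\Pi_2|$ (by summing $r_{\Pi_1 - \Pi_2}(a) \geq T$ over $a \in A$), case analysis on which term of the ST bound dominates and optimization in $\tau$ yield $\tau^2 |S_\tau| \lesssim |\Pi_1|^3 |\Pi_2|^3 \log|\Pi_1|/T^4$, and hence the claim. The principal obstacle I anticipate is the line-multiplicity issue in the Szemer\'edi--Trotter step: handled carelessly, it would cost a factor of $|\Pi_2|^{1/3}$, whereas the dyadic decomposition must be carried out so that only a single logarithmic factor is accumulated, corresponding to the $\log|\Pi_1|$ in the stated bound.
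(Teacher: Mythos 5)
Your strategy differs from the paper's: there, all four variables of the multiplicative-energy quadruple are lifted at once through the $T$-popularity to gain the full $T^{-4}$, Cauchy--Schwarz is applied once, and the resulting count is bounded as a number of affine collinear triples in $\Pi_1\times\Pi_2$; no dyadic pigeonhole on $\E^\times(A)$, and no line-multiplicity issue, ever arises. Your scheme (pigeonhole, lift two variables, bound $\sum_{\lambda\in S_\tau}U_\lambda$ by incidences) can be made to work, but as written it has a genuine gap at exactly the step you yourself flag. The claimed estimate
\[
\sum_{\lambda\in S_\tau}U_\lambda\;\lesssim\;\bigl(|\Pi_1|^2|\Pi_2|^2|S_\tau|\bigr)^{2/3}+|\Pi_1|^2|\Pi_2|+|S_\tau||\Pi_2|^2
\]
is the bound one would get if the lines $p=\lambda p'+c$ were unweighted; in fact each such line carries the weight $r_{\Pi_2-\lambda\Pi_2}(c)$, and the naive dyadic decomposition in the weight level $m$ (using only $|\{c:r_{\Pi_2-\lambda\Pi_2}(c)\geq m\}|\leq|\Pi_2|^2/m$) gives a sum $\sum_m m^{1/3}(\cdots)^{2/3}$ that is not geometric --- this is precisely the $|\Pi_2|^{1/3}$ you worry about, and you do not say how to avoid it. Moreover, under the hypotheses of the lemma one always has $|S_\tau|\leq|A|^2\leq|\Pi_1||\Pi_2|$, and in this regime your claimed leading term $(|\Pi_1||\Pi_2|)^{4/3}|S_\tau|^{2/3}$ is \emph{smaller} than what the correct weighted estimate delivers, so the claim is not merely unproved but dubious as stated.

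The natural repair replaces your incidence bound by one of a different shape. Writing $U_\lambda=\sum_z r_{\Pi_1-\lambda\Pi_1}(z)\,r_{\Pi_2-\lambda\Pi_2}(z)$ and applying Cauchy--Schwarz in $z$ and then in $\lambda$ gives
\[
\sum_{\lambda\in S_\tau}U_\lambda\;\leq\;\Bigl(\sum_{\lambda\in S_\tau}\E(\Pi_1,\lambda\Pi_1)\Bigr)^{1/2}\Bigl(\sum_{\lambda\in S_\tau}\E(\Pi_2,\lambda\Pi_2)\Bigr)^{1/2},
\]
and $\sum_{\lambda\in S_\tau}\E(\Pi_i,\lambda\Pi_i)=\sum_{l\colon\,\mathrm{slope}(l)\in S_\tau}|l\cap\Pi_i^2|^2$. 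Splitting this second moment at a threshold $K$ (lines with at most $K$ points contribute $\leq K\cdot|\Pi_i|^2|S_\tau|$, since each point of $\Pi_i^2$ lies on exactly $|S_\tau|$ admissible lines; the tail is controlled by the $k$-rich line bound of Lemma~\ref{lem:stest}) gives $\ll K|\Pi_i|^2|S_\tau|+|\Pi_i|^4/K$, and the choice $K=|\Pi_i|/|S_\tau|^{1/2}$ (legitimate since $|S_\tau|\leq|\Pi_i|^2$) yields $\ll|\Pi_i|^3|S_\tau|^{1/2}$. Thus $\sum_\lambda U_\lambda\ll(|\Pi_1||\Pi_2|)^{3/2}|S_\tau|^{1/2}$, and combined with $|S_\tau|\tau T^2\leq\sum_\lambda U_\lambda$ this gives $\tau^2|S_\tau|T^4\ll|\Pi_1|^3|\Pi_2|^3$ on squaring --- the conclusion follows immediately, with no case analysis and no further use of $|S_\tau|\tau\leq|A|^2$. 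So the strategy is salvageable and genuinely distinct from the paper's, but the incidence step as you wrote it is the missing (and nontrivial) piece; the final ``optimisation in $\tau$'' you gesture at is also left uncarried-out, though that is secondary.
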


\begin{proof}
To estimate 
$$\E^\times(A) = \left| \left\{ (a,b,c,d) \in A^4:\; \frac{a}{b}=\frac{c}{d}\right\}\right|$$ observe that 
this quantity is bounded from above by $T^{-4}$ times the number of solutions of the equation
\begin{equation}\label{slopes}
t = \frac{x-y}{x'-y'} = \frac{u-v}{u'-v'}\,:\,x,x',u,u'\in \Pi_1, \;y,y',v,v' \in \Pi_2\,,\;t\in \R\setminus\{0\}\,.
\end{equation}
The latter number of solutions, the variables $x,\ldots,v', t$ belonging to the sets as specified, is
$$
\sum_t  \left( \sum_{x,y'} r_{(x-\Pi_2)/(\Pi_1-y')} \right)^2 \leq |\Pi_1| |\Pi_2| \sum_t \sum_{x,y'} r^2_{(x-\Pi_2)/(\Pi_1-y')}(t)\,,
$$
by Cauchy-Schwarz. Rearranging the fractions 
\[
\sum_{t\in \R\setminus\{0\}} \sum_{x,y'} r^2_{(x-\Pi_2)/(\Pi_1-y')}(t)\ = \left|\left\{ \frac{x- v}{x- v'} =  \frac{y' - u}{y'-u'}:\, x,u,u'\in \Pi_1,\;y',v,v'\in \Pi_2\;: y'\neq u,u'\,, x\neq v,v'\right\}\right| \,.
\]

The latter quantity is the number of affine collinear triples with two $(u,v), (u',v')\in \Pi_1\times \Pi_2$ and one, different from both of the above,  $(y',x)\in \Pi_2\times \Pi_1$. The trivial aspect of the count when $(u,v) = (u',v')$ yields merely $|\Pi_1|^2|\Pi_2|^2$.

Otherwise by Lemma \ref{lem:stest}, the number of $k\geq2$ rich affine lines in $\Pi_1\times \Pi_2$ is $O\left(\frac{|\Pi_1|^2|\Pi_2|^2}{k^3}\right)$. We now take dyadic values $k_j=2^j,\,j\geq 1$, partitioning these lines into groups $L_{k_j}$, supporting the number of points of $\Pi_1\times \Pi_2$ in the  interval $[2^j,2^{j+1})$. The number of pairs of points from $\Pi_1\times \Pi_2$ on a line from the $j$th group is $\leq 4 k_j^2$. Furthermore, by Lemma \ref{lem:stest}, the number of incidences between $L_{k_j}$ and $\Pi_2\times\Pi_1$ is
$$ O\left( ( |L_{k_j}| |\Pi_1| |\Pi_2|)^{2/3} + |L_{k_j}|\right) \ll \frac{|\Pi_1|^{2} |\Pi_2|^{2}}{k_j^2}\,.$$
Multiplying by the latter bound by $ |\Pi_1||\Pi_2|k_j^2$ and summing in  $O(\log|\Pi_1|)$ values of $j$ absorbs the above trivial bound for the case $(u,v) = (u',v')$ and completes the proof.
\end{proof}
We remark that the proof of Lemma \ref{l:fpms}  easily adapts to  yield the following statement. 

\medskip{\em Let $P\subset \R^2$ have empty intersection with coordinate axes and $Q \subset \R^2$ meet any line $l$ in at most $|Q|^{1/2}$ points. Suppose, $\forall\,p\in P$, $r_{Q-Q}(p)\geq T^2$, for some $T\geq 1$ (or the same for $r_{Q+Q}(p)$). Then}
$$
\sum_{l :\,(0,0)\in l} |P\cap l|^2 \ll  \frac{ |Q|^{3}\log|Q| }{T^4}\,.
$$

\section{Proposition~\ref{p:main}}\label{sec:prop}
In this section we present our main proposition. Unlike the arguments of Konyagin and Shkredov \cite{KS1, KS2} and Shakan \cite{Shakan}, we lower bound the quantity $|A A_\lambda|$ instead of the smaller quantity $|A_\lambda A_\lambda|$ (definitions forthcoming). Our proof is elementary in that it uses only geometric and combinatorial observations. 

In contrast, Shakan's analogous proof used higher energies. One can apply Shakan's `less elementary' proof to the quantity $|AA_\lambda|$ to obtain the same conclusion as the forthcoming Proposition~\ref{p:main}. In fact, one can even replace Shakan's choice of second- and fourth-moment energies with the `more natural' (when using the Szemer\'edi-Trotter theorem) third moment energy to realise the same conclusion.

\begin{proposition}\label{p:main} 
Let $A\subseteq \R_{>0}$ and let $|A+A|=K|A|$ and $|AA|=M|A|$. Suppose that the multiplicative energy of $A$ is supported on slopes $S_\tau\subseteq A/A$ so that $r_{A/A}(\lambda)\in [\tau, 2\tau)$ for each $\lambda\in S_\tau$ and 
\[
|S_\tau|\tau^2 \leq \E^\times(A) \leq 4|S_\tau|\tau^2\log|A|\,.
\]

Then there exists $S_\tau'\subseteq S_\tau$ with $|S_\tau'|\geq\frac1{64} |S_\tau|$ 
such that, for every $\lambda \in S_\tau'$, 
\[
	|AA_\lambda| \gg \frac{|A|^6}{ M^4K^8|S_\tau|^{1/2}(\log|A|)^7}\,,
\]
where $A_\lambda = A \cap \lambda^{-1} A$.
 \end{proposition}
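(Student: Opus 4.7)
The plan is to follow the Konyagin--Shkredov framework sketched in Section~\ref{sec:discussion}: produce many additive collisions within bunches of consecutive slopes via a Solymosi sector argument, extract few-sums information about $A_\lambda$, apply the few-sums-many-products bound of Lemma~\ref{l:fpms} to upper bound $\E^\times(A_\lambda)$, and finally turn that into a lower bound on $|AA_\lambda|$ by Cauchy--Schwarz.

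Concretely, set $N = C|A+A|^2/\E^\times(A)$ with $C$ a large absolute constant to be fixed; using the hypothesis, $N$ is comparable to $K^2|A|^2\bigl(|S_\tau|\tau^2\log|A|\bigr)^{-1}$. Order the slopes in $S_\tau$ and partition them into $\lceil|S_\tau|/N\rceil$ bunches of (at most) $N$ consecutive slopes. Within each bunch $B$, consider ordered vector sums $(a_1,\mu_1 a_1)+(a_2,\mu_2 a_2)$ with $\mu_1<\mu_2$ in $B$ and $a_i\in A_{\mu_i}$. Solymosi's sector argument shows that all such sums lie in $(A+A)^2$ and that sums from distinct bunches lie in disjoint angular sectors, so the total number of \emph{distinct} sums across all bunches is at most $|A+A|^2 = K^2|A|^2$. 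The number of ordered contributing pairs per bunch is $\gtrsim N^2\tau^2$ and there are $|S_\tau|/N$ bunches, so the grand total with multiplicity is $\gtrsim N|S_\tau|\tau^2\sim C|A+A|^2/\log|A|$. For $C$ sufficiently large this forces many collisions, and dyadic pigeonholing extracts a subset $S_\tau'\subseteq S_\tau$ with $|S_\tau'|\geq |S_\tau|/64$ on which the collisions are distributed in a structured way: every $a\in A_\lambda$ admits $\gtrsim T$ representations as a difference from an auxiliary set $\Pi$ built out of dilates of $A$ (and of the various $A_\mu$) by bunch-slopes, where $|\Pi|$ and $T$ are controlled by $|A+A|$, $|AA|$, $|A|$, $\tau$, and $N$.

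Lemma~\ref{l:fpms}, applied to $A_\lambda$ with $\Pi_1=\Pi_2=\Pi$, then yields $\E^\times(A_\lambda)\lesssim |\Pi|^6/T^4$. Two applications of Cauchy--Schwarz,
\[
(|A||A_\lambda|)^2 \leq |AA_\lambda|\cdot\E^\times(A,A_\lambda) \leq |AA_\lambda|\cdot\sqrt{\E^\times(A)\,\E^\times(A_\lambda)}\,,
\]
combined with $|A_\lambda|\sim\tau$ and $\E^\times(A)\sim|S_\tau|\tau^2\log|A|$, convert the multiplicative-energy bound into a lower bound on $|AA_\lambda|$. The factor $|S_\tau|^{1/2}$ in the denominator of the target inequality arises precisely from $\sqrt{\E^\times(A)}$ appearing here.

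The main obstacle is the design of $\Pi$ and the tracking of $T$ in the middle step: one needs the ratio $|\Pi|^6/T^4$ to come out proportional to $\tau^2K^{16}M^8/|A|^8$ (up to logs), which after the Cauchy--Schwarz conversion yields the $M^4K^8$ denominator. The $|AA|$ factor enters because $\Pi$ naturally includes product sets of the form $A\cdot A_\mu\subseteq AA$; the $|A+A|$ factor appears via the collision count in the Solymosi step; and the $(\log|A|)^7$ bookkeeps the accumulated logarithmic losses from several pigeonholing operations. The small loss constant $1/64$ in the size of $S_\tau'$ signals that only a handful of pigeonholing steps are permitted, which tightly constrains the entire layout of the numerology.
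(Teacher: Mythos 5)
Your outline matches the paper's strategy step by step: Solymosi-style bunching, a collision count forced by the choice of $N$, a few-sums--many-products application to bound $\E^\times$ of a piece of $A_\lambda$, and two Cauchy--Schwarz passes to convert this into a lower bound on $|AA_\lambda|$. The final numerological target you state (that the lemma should deliver $\E^\times \lesssim \tau^2 N^8$ up to logs) is also correct. However, there is a genuine gap precisely where you flag one, and your heuristic for filling it is actually wrong in a way that matters.

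Specifically, the sets $\Pi_1,\Pi_2$ fed into Lemma~\ref{l:fpms} are \emph{not} product sets of the form $A\cdot A_\mu$, and $|\Pi|$ is \emph{not} controlled by $|AA|$ or $|A+A|$. The collision analysis (after several rounds of pigeonholing on bunches, then on $\lambda_3$ inside a good bunch, then on $\lambda_1,\lambda_2,\lambda_4$) produces, for each surviving slope $\lambda$, three \emph{fixed} slopes $\lambda_1,\lambda_2,\lambda_4$ from its bunch and two explicit scalars $c_1,c_2$ such that the equation $a = c_1 a_1 - c_2 a_2$ with $a_1\in A_{\lambda_1}, a_2\in A_{\lambda_2}$ has $\geq S := \tau^2/(2N^2)$ solutions with $a\in A_\lambda$. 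So $\Pi_1 = c_1 A_{\lambda_1}$ and $\Pi_2 = c_2 A_{\lambda_2}$, both of size $\sim\tau$; the dependence on $K$ and $M$ enters entirely through $T$, via $S$, via the bunch size $N \sim K^2 M |A|^{-1}\log|A|$, and not through $|\Pi|$ at all. If one tried to take $\Pi$ of size $\sim M|A|$ as your explanation suggests, the numerology would not close. A second smaller point: Lemma~\ref{l:fpms} is applied not to all of $A_\lambda$ but to a dyadically popular subset $A'_\lambda\subseteq A_\lambda$ with $r_{c_1 A_{\lambda_1} - c_2 A_{\lambda_2}}(a)\geq S/(2|A'_\lambda|\log|A|)$ for every $a\in A'_\lambda$; your claim that "every $a\in A_\lambda$" has this property is not available, though the distinction is harmless at the end because the $|A'_\lambda|$ factors cancel in the Cauchy--Schwarz step. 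With those two corrections your outline becomes a faithful summary of the paper's proof, so the main defect is that the central combinatorial extraction of $(\Pi_1,\Pi_2,T)$ is left unproved and the sketch you give of it points in the wrong direction.
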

\begin{proof}
Take a natural number
\begin{equation}\label{e:en}
N := C K^2M |A|^{-1} \log|A|\,,
\end{equation}
for a sufficiently large $C$, say $C>128$. From Solymosi's inequality \eqref{e:sol}, it follows that $K^2 M \geq \frac1{4}|A|\lceil \log |A|\rceil^{-1}$, hence, in particular, $N>2$. 
We may also assume that $N<|A|^{1/2}$; indeed, if $N>|A|^{1/2}$, then it is enough to show that $|AA_\lambda|>C^4 |S_\tau|^{-1/2}\log(|A|)^{-3}$, which is readily satisfied since $|AA_\lambda|>|A|$.  

The slopes in $S_\tau$ are positive; we order them by increasing value and partition $S_\tau$ into bunches of $N$ consecutive lines. There are $\lfloor|S_\tau|/N\rfloor$ `full' bunches of exactly $N$ slopes, and at most one additional bunch with fewer than $N$ slopes that we delete with no consequence,  since $N$ is very small relative to $|S_\tau|$. Henceforth, we assume that all bunches are full.

Let $\ell_\lambda$ denote the line through the origin with slope $\lambda$.
For each pair of distinct slopes $\lambda_i,\lambda_j$ in a fixed bunch $\mathcal B$, we create between $\tau^2$ and $4\tau^2$ vectors in $(A+A)\times (A+A)$ from the sum of each of the $\sim \tau$ elements of $A\times A$ supported on the line $\ell_{\lambda_i}$,  with each of the $\sim \tau$ elements of $A\times A$ supported on $\ell_{\lambda_j}$. We denote these vectors by $\mathcal{A}_{\lambda_i} + \mathcal{A}_{\lambda_j}$.
These $\sim \tau^2$ vector sums lie between $\ell_{\lambda_i}$ and $\ell_{\lambda_j}$, and in particular, between the two extremal slopes in the bunch. A new element in $(A+A)\times(A+A)$ created in this manner could appear from multiple pairs of slopes within the same bunch; we must account for this over-counting. Note however that an element of $(A+A)\times (A+A)$ created in this way cannot have come from from pairs in two different bunches.  

Define, for fixed slopes $\lambda_1,\lambda_2,\lambda_3,\lambda_4$ the quantity
\[
q(\lambda_1,\lambda_2,\lambda_3,\lambda_4):=\left|
\left(\mathcal{A}_{\lambda_1} + \mathcal{A}_{\lambda_2} \right)
\cap
\left(\mathcal{A}_{\lambda_3} + \mathcal{A}_{\lambda_4} \right)
\right|\,,
\]
and for a fixed bunch $\mathcal{B}$, let 
\[
Q_B := \sum_{\lambda_1,\lambda_2,\lambda_3,\lambda_4\in B} q(\lambda_1,\lambda_2,\lambda_3,\lambda_4)
\]
where the sum is taken over distinct pairs of slopes: $\lambda_1\neq \lambda_2$, $\lambda_3\neq \lambda_4$ and $\{\lambda_1,\lambda_2\} \neq \{\lambda_3,\lambda_4\}$. 

For slopes $\lambda_1,\lambda_2,\lambda_3,\lambda_4 \in B$, with $B\in \mathcal{B}$, the quantity $q(\lambda_1,\lambda_2,\lambda_3,\lambda_4)$ counts (using linear algebra to eliminate the variable $a_4 \in A_{\lambda_4}$) the number of solutions to the equation
\begin{equation}\label{int_s}
a_3= \frac{\lambda_1 -\lambda_4}{\lambda_3-\lambda_4} a_1 + \frac{\lambda_2-\lambda_4}{\lambda_3-\lambda_4} a_2\,,\quad a_1\in A_{\lambda_1}, a_2\in A_{\lambda_2}, a_3 \in A_{\lambda_3}\,.
\end{equation}

We use the inclusion-exclusion principle to count the number of new points in $(A+A)\times (A+A)$ created from forming vector sums from a fixed  bunch $B\in \mathcal{B}$. This number 
is at least 
\begin{equation}\label{e:ie}
\tau^2\binom{N}{2} - Q_B\,.
\end{equation}

If $Q_B\leq N^2\tau^2/8$ for at least half of the bunches $B\in \mathcal{B}$, then \[
|A+A|^2 \geq \frac12\bigg\lfloor\frac{|S_\tau|}{N}\bigg\rfloor \left(\tau^2 \binom{N}{2} - \frac{N^2}{8} \tau^2\right) \geq \frac{|S_\tau|\tau^2 N}{32} \geq \frac{|A|^4N}{128|AA|\log|A|}\,;
\]
by substituting in the value for $N$, we obtain a contradiction. 
Henceforth, let us suppose that $Q_B\geq N^2 \tau^2/4$ for at least $50\% $ of the bunches and let us relabel $\mathcal{B}$ to denote these bunches. 

For $B\in \mathcal{B}$, let 
$\Lambda_B = \{\lambda_3\in B: \sum q(\lambda_1,\lambda_2,\lambda_3,\lambda_4) > \frac{\tau^2 N}{8}\}$, where the sum is taken over slopes $\lambda_1,\lambda_2,\lambda_4\in B$, restricted as in the definition of $Q_B$. The lower bound on $Q_B$ implies that $|\Lambda_B|\geq N/8$. Hence, by an application of the pigeonhole principle on the variables $\lambda_1,\lambda_2,\lambda_4$, we can find a set of at least $N/16$ slopes $\lambda_3 \in B$ so that 
there exist slopes $\lambda_1,\lambda_2,\lambda_4\in B $ such that the number of distinct solutions to equation 
\eqref{int_s}
is 
\begin{equation}\label{e:S}
\geq \frac{\tau^2}{2N^{2}}  :=S\,.
\end{equation}

Let us redefine $S_\tau'$ as the subset of the above slopes $\lambda_3$, so that $|S_\tau'|\geq
\frac{1}{64}|S_\tau|$ -- in fact, we can manipulate the constants in the argument so that $S_\tau'$ constitutes any desired proportion of $S_\tau$.

For each $\lambda\in S_\tau'$, let $A_\lambda'$ be the dyadically popular subset of $A_\lambda$. Namely, we  partition the set of $a \in A_\lambda$ by dyadic values  of their number of realisations as the difference above -- a member of the partition is identified by integer $0\leq s\ll\log|A|$, so that $r_{c_1A_{\lambda_1} - c_2 A_{\lambda_2}} (a)\in [2^s,2^{s+1})$ where $c_1 = (\lambda_1 - \lambda_4)(\lambda - \lambda_4)^{-1}$ and $c_2 = (\lambda_4 - \lambda_1)(\lambda - \lambda_4)^{-1}$. 

Let $A'_\lambda$ be the dyadic subset with the largest contribution to the number of solutions to \eqref{int_s}, by the pigeonhole principle 
$$
\forall a \in A_\lambda', \;\;\;r_{c_1A_{\lambda_1} - c_2 A_{\lambda_2}} (a) \geq \frac{S}{2|A'_\lambda|\log|A|}\,.$$

Applying Lemma \ref{l:fpms} to $A_\lambda'$ with $T= \frac{S}{2|A'_\lambda|\log|A|}$ and $|\Pi_1|,|\Pi_2|\in [\tau,2\tau)$ yields
\begin{equation}\label{e:eal}
\E^\times(A_\lambda') \ll \frac{ \tau^6 |A'_\lambda|^4 (\log|A|)^5 }{S^4}\,.
\end{equation}
After two more applications of the Cauchy-Schwarz inequality, we get
\[
|AA_\lambda|\geq|AA_\lambda'| \geq \frac{|A|^2|A_\lambda'|^2}{\E^\times(A,A'_\lambda)} \geq \frac{|A|^2|A_\lambda'|^2}{\sqrt{\E^\times(A)\E^\times(A'_\lambda)}}\,.
\]
Substituting the value of $S$ from \eqref{e:S} and $\E^\times(A)\gg |S_\tau|\tau^2\log^{-1}|A|$ completes the proof. 

\end{proof}

\section{Proof of Theorem~\ref{t:sumprod}}\label{sec:proofofsumprod}

Without loss of generality, assume that $A$ has positive elements. Denote $|A+A|=K|A|$ and $|AA|=M|A|$. We will show that $\max(K,M)\geq |A|^{\frac1{3}+\exponent-o(1)}$.

Consider the point set $A\times A \in \R^2$. We begin by an application of the dyadic pigeonhole principle to extract a subset of $A\times A$ which supports at least a logarithmic factor of the energy of $A$: decompose the set of $|A/A|$ slopes through the origin supporting $A\times A$ into $\ll \log|A|$ dyadic sets, where each slope in the $j$-th dyadic set $S_j$ contains between $2^{j-1}$ and $2^j-1$ points of $A\times A$. Then $\sum_j|S_j|2^{2(j-1)}\leq \E^\times(A)< \sum_j|S_j|2^{2j}$. Let $j_0$ denote the index for which $|S_{j_0}|2^{2j_0}$ is maximal, and write $S_\tau = S_{j_0}, \tau = 2^{j_0}$. Then $|S_\tau|\tau^2 \gg \E^\times(A)/\log|A|$. We may further assume that $\tau > C$ for some (large) constant $C>0$, since otherwise $\E^\times(A)\leq C^2 |A|^2$, and there is nothing to prove.

Let the points of $A\times A$ on the line with the slope $\lambda \in S_\tau$ be written as 
\[
\mathcal{A}_\lambda:=\{(a,\lambda a)\in A\times A:\,a\in A_\lambda \subseteq A\}\,,\;\;|A_\lambda|\in [\tau,2\tau)\}\,
.\]
The set $A_\lambda=A\cap \lambda^{-1} A$ is the set of the $x$-coordinates of points in $\mathcal{A_\lambda}\subset \ell_\lambda$, where $ \ell_\lambda$ is the line through the origin with slope $\lambda$.

We apply Proposition~\ref{p:main} to $A$ to pass from $\lambda\in S_\tau$  to $\lambda\in S_\tau'$.  Clearly,
$r_{A/A_\lambda}(\lambda)=|A_\lambda|$, by definition of $A_\lambda$.

Thus for any $a\in A$ and any $a_\lambda\in A_\lambda$, one has $$\lambda =\frac{a(\lambda a_\lambda)}{aa_\lambda}\in AA/AA\,.$$
(Konyagin and Shkredov refer to this truism as the Katz and Koester inclusion introduced in \cite{KK}).
There are $|A_\lambda A|$ distinct values of the denominator $aa_\lambda$. Thus, by the lower bound of Proposition \ref{p:main},
\begin{equation}\label{e:T}
\forall \lambda\in S_\tau',\;\;	r_{AA/AA}(\lambda) \;\geq \;|A A_\lambda | \gg  \frac{|A|^6}{|S_\tau|^{1/2}M^4K^8(\log |A|)^7}:= T\,.
\end{equation}

It remains to relate the set $S_\tau'$ to $A$ and use the Theorem \ref{t:main}. We choose a subset of  $S_\tau'$  by intersecting the set of at least $\tau |S_\tau'|$  points of $A\times A$ supported on lines through the origin with slopes in $S_\tau'$,  by a vertical line with some fixed $x$-coordinate $a_0\in A$. By the pigeonhole principle, there is an element $a_0\in A$, where the intersection has cardinality at least the average $\frac{\tau |S_\tau'|}{|A|}$. 

Without loss of generality $a_0=1$. Thus we have $B\subseteq A$, with $|B| \geq \frac{\tau |S_\tau|}{|A|}$ and such that $\forall b\in B,\,r_{AA/AA}(b) \gg T$. Concretely, $B = A/a_0 \cap S_\tau'$.
 
Using $K|A|=|A+A|\geq |B+B|$ we apply Theorem~\ref{t:main} to the set $B$ with $\Pi = AA$ and $T = T$, defined in \eqref{e:T}, to get a lower bound for $|B+B|$. We group together $|S_\tau|\tau^2 \gg \E^\times(A)\log^{-1}|A|$ to take advantage of the Cauchy-Schwarz relation $\E^\times(A)\geq |A|^3/M$, so that (suppressing powers of $\log|A|$):
\[
K^{283}M^{176}\gtrsim |A|^{94}(|S_\tau|\tau^2)^{41/2} |S_\tau|^4\,.
\]
 
We recycle the bound on $T$ to bound $|S_\tau|$: since $M|A|=|AA|\geq |AA_\lambda|\gg T$, we have that $|S_\tau|\geq |A|^{10}M^{-10}K^{-16}$. This yields the inequality $$K^{694}M^{473}\gtrsim |A|^{391}\,$$ whence the claim of Theorem~\ref{t:sumprod} follows.

$\hfill \Box$

\medskip
Reviewing the proof of Theorem \ref{t:sumprod}, we remark that the main reasons why, within the Konyagin-Shkredov strategy, the gain over $\delta=\frac{1}{3}$ is so small are (i) a large power $N^8$ of $N$ (defined by \eqref{e:en} precisely to measure the eventual gain over $\delta=\frac{1}{3}$) in estimate \eqref{e:eal} (where $S= \tau^2N^{-2}$) and (ii) the relative weakness of the forthcoming Theorem \ref{t:main}. Lowering the power $N^8$ would require an unlikely improvement of the symmetric version of Lemma \ref{l:fpms}. A  stronger version of  Theorem \ref{t:main}  might  come from further quantitative progress in understanding the {\em Few Products, Many Sums}, alias weak \ES conjecture, thus re-emphasising its pivotal role in the sum-product theory at large.

\section{Proof of Theorems \ref{t:convex}~and~\ref{t:main}}\label{sec:proofofthm2}
We begin with the following regularisation lemma.

\begin{lemma}\label{lem:regu}
Let $\Reps$ be a deterministic rule (procedure) with parameter $\eps \in (0,1)$ that, to every sufficiently large finite additive set $X$, associates a subset $\Reps(X)\subseteq X$ of cardinality $|\Reps(X)|\geq (1-\eps)|X|$.

For any such rule $\Reps$, any $s>1$ and a sufficiently large finite set $A$,  set $c_1 = \eps \log(|A|) \in (0,1)$. 
Then there exists a set $B\subseteq A$ (depending on $\Reps, \,s$), with $|B|\geq (1-c_1) |A|$ such that 
\[\E_s(\Reps(B))\geq c_2\,\E_s(B)\,,\]
where $c_2 = (1- 2\eps e^{1-s} \in (0,1]$. 
 \end{lemma}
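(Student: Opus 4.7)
The plan is to argue by contradiction, iterating the rule $\Reps$ itself. Assume no such $B$ exists, so that for every $B\subseteq A$ with $|B|\geq (1-c_1)|A|$ one has $\E_s(\Reps(B))<c_2\E_s(B)$. Set $B_0:=A$ and $B_{k+1}:=\Reps(B_k)$. The defining property of the rule gives $|B_k|\geq (1-\eps)^k|A|\geq (1-k\eps)|A|$, so the cardinality constraint $|B_k|\geq (1-c_1)|A|$ is preserved throughout $0\leq k\leq K$, where $K:=\lfloor c_1/\eps\rfloor=\lfloor \log|A|\rfloor$, using $c_1=\eps\log|A|$. The contradiction hypothesis then telescopes to the geometric decay
\[\E_s(B_K)<c_2^K\E_s(A).\]

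To close the argument I would confront this decay with the trivial upper bound $\E_s(A)\leq |A|^{s+1}$ (following from $\sum_d r_A(d)=|A|^2$ together with $r_A(d)\leq |A|$) and a lower bound on $\E_s(B_K)$. The crudest lower bound is the diagonal contribution $\E_s(B_K)\geq |B_K|^s\geq (1-c_1)^s|A|^s$. Combining these yields the compact inequality $(1-c_1)^s<c_2^K\cdot |A|$. Taking logarithms, with $-\log(1-2\eps e^{1-s})\geq 2\eps e^{1-s}$ and $-\log(1-c_1)\geq c_1$, careful logarithmic accounting should force the required quantitative contradiction with $K\sim\log|A|$.

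The main obstacle I expect is that the crude diagonal lower bound on its own is unlikely to be sharp enough to extract the precise factor $e^{1-s}$ in $c_2$: with it alone one seems to recover only a much weaker constant of the form $c_2<1/e$. Obtaining $c_2=1-2\eps e^{1-s}$ should require a finer, step-by-step analysis of the energy drop, via the pointwise convexity inequality $a^s-b^s\leq sa^{s-1}(a-b)$ applied to each pair $(r_{B_k}(d),\,r_{B_{k+1}}(d))$, together with a H\"older-type estimate controlling $\sum_d r_{B_k}(d)^{s-1}(r_{B_k}(d)-r_{B_{k+1}}(d))$ as a bounded fraction of $\E_s(B_k)$. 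This per-iteration control, combined with the fact that at most an $\eps$-fraction of elements is removed, should yield a drop of at most $(1-c_2)\E_s(B_k)$ with $(1-c_2)=2\eps e^{1-s}$; the exponent $s-1$ that appears through convexity is precisely what produces the $e^{1-s}$ factor.
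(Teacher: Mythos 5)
Your overall architecture is the same as the paper's: iterate the rule $\Reps$, telescope the contradiction hypothesis into a geometric decay $\E_s(B_K)<c_2^K\E_s(A)$ over $K=\lfloor\log|A|\rfloor$ steps (during which the cardinality constraint $|B_k|\geq(1-k\eps)|A|\geq(1-c_1)|A|$ is preserved), and then confront this decay with absolute bounds on the energy. The paper does exactly this, with the same iteration and the same upper bound $\E_s(A)\leq|A|^{s+1}$.

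Where you diverge is in the closing step, and this is exactly the issue you flagged yourself. The paper does \emph{not} use the diagonal contribution $\E_s(B_K)\geq|B_K|^s$. Instead it uses the lower bound $\E_s(A_I)\geq|A_I|^2$, which holds for every $s\geq 1$: for each $d$ with $r_{A_I-A_I}(d)>0$ we have $r_{A_I-A_I}(d)\geq 1$, hence $r_{A_I-A_I}(d)^s\geq r_{A_I-A_I}(d)$, and summing gives $\E_s(A_I)\geq\sum_d r_{A_I-A_I}(d)=|A_I|^2$. With this, the comparison reads
\[
(1-c_1)^2|A|^2\leq\E_s(A_I)<c_2^I\E_s(A)\leq |A|^{s+1-\log c_2^{-1}}\,,
\]
which is already absurd for large $|A|$ once the exponent on the right drops below $2$, i.e.\ once $\log c_2^{-1}>s-1$, i.e.\ $c_2<e^{1-s}$. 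The exponent $s$ enters only through the trivial bound $\E_s(A)\leq|A|^{s+1}$, and the factor $e^{1-s}$ falls out immediately. No per-step convexity analysis of the type $a^s-b^s\leq sa^{s-1}(a-b)$ is used, nor is it needed: you were led to hypothesize it because the diagonal lower bound $|B_K|^s$ matches $|A|^{s+1}$ up to a single factor of $|A|$ regardless of $s$, erasing the $s$-dependence. Replacing it by $|A_I|^2$ restores it.

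A secondary point: the constant printed in the statement, ``$c_2=(1-2\eps e^{1-s}$'', has a misplaced parenthesis. The reading consistent with the proof is $c_2=(1-2\eps)e^{1-s}$, a fixed constant strictly below $e^{1-s}$. Your reading $c_2=1-2\eps e^{1-s}$, which tends to $1$ as $\eps\to0$, cannot be extracted from the iteration argument (the accumulated decay over $\approx\log|A|$ steps would be only a bounded constant, not enough to beat $|A|^{s-1}$), and that is likely another reason you suspected a finer pointwise argument was required.
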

\begin{proof}
Construct a sequence of subsets of $A$ as follows. Set $A_0 := A$; if $\E_s(A_{i+1}) < c_2~\E_s(A_i)$, define $A_{i+1}:=\Reps(A_i)$. By definition of $\Reps$, we have the lower bound $|A_i|\geq (1-\eps)^i|A|\geq (1-i\eps)|A|$ for any index $i$ for which $A_i$ is defined.
 
This process must terminate after at most $I = \lfloor \log|A| \rfloor$ iterations. Indeed, suppose that we have constructed the set $A_I$. Then, using the trivial bound $|A_I|^2\leq \E_s(A_I)\leq |A_I|^{s+1}$, we have 
\[
(1-c_1)^2|A|^2\leq |A_I|^2 \leq \E_s(A_I)< c_2\E_s(A_{I-1}) \leq c_2^I\E_s(A) \leq |A|^{s+1-\log c^{-1}_2}\,.
\]
The choice of $c_2$ yields a contradiction.
\end{proof}
%

\medskip

In this section we prove the following statement implying both theorems.

\begin{theorem} \label{t:five}
    Let finite $A,\Pi_1,\Pi_2\subset \R\backslash\{0\}$ satisfy $|\Pi_1|, |\Pi_2|\geq |A|$.
    
(i) If  $r_{\Pi_1\Pi_2}(a)\geq T$ for all $a\in A$, for some $T\geq 1$,     
    then 
    \[
	    |A+A|^{19} |\Pi_1|^{22}|\Pi_2|^{22} \gg |A|^{41} T^{33} (\log|A|)^{-23}\,.
    \]
    
(ii)  If $A$ is a convex set, then
   \[
   |A+A|\gg |A|^{30/19} (\log|A|)^{-23/19}\,.
   \]
\end{theorem}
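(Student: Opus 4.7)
The plan is to carry out the Shkredov-style truism argument outlined in Section~\ref{sec:discussion2}, substituting Lemma~\ref{lem:regu} for the spectral method. The starting point is the identity $b-c=(a+b)-(a+c)$ for $(a,b,c)\in A^3$. Partitioning triples by $(d,x,y)=(b-c,\,a+b,\,a+c)$ modulo the common shift $(a,b,c)\mapsto(a-t,b+t,c+t)$ and writing $r(d,x,y)$ for the class size, one obtains the two identities
\[
\sum_{(d,x,y)} r(d,x,y) = |A|^3, \qquad \sum_{(d,x,y)} r(d,x,y)^2 = \E_3(A);
\]
the second by factoring pairs of triples in the same class through the shift $t\in A-A$, which yields $\sum_t r_{A-A}(t)^3=\E_3(A)$ after checking that each of $a,b,c$ contributes a factor $r_{A-A}(t)$.

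The next step is to localise to popular data. A threefold dyadic pigeonhole on $r_{A-A}(d)$, $r_{A+A}(x)$, $r_{A+A}(y)$ selects scales $\Delta$ (for differences) and $\Sigma$ (for sums) together with a set $\Gamma_*$ of popular triples for which $\sum_{\Gamma_*} r(d,x,y)\gtrsim |A|^3$ at a polylogarithmic cost. This is the point at which Lemma~\ref{lem:regu} is deployed: applied with a rule $\Reps$ that prunes the (few) elements of $A$ responsible for anomalous difference- or sum-popularity, it produces a subset $B\subseteq A$ of almost full density on which one may replace $A$ by $B$ without hurting either $\E_3$ or the restricted sum. Cauchy--Schwarz then yields $|A|^6\lesssim |\Gamma_*|\cdot \E_3(A)$, and matters reduce to controlling each factor on the right.

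The $\E_3$ factor is controlled by Lemma~\ref{lem:estimatinge3}, equation~\eqref{e:e3(b,x)}, namely $\E_3(A)\ll |A|^2|\Pi_1|^2|\Pi_2|^2\log|A|/T^3$. For $|\Gamma_*|$ I factor $|\Gamma_*|\le|D^*|\cdot\max_{d\in D^*} r_{S^*-S^*}(d)$, where $D^*,S^*$ are the popular difference and sum sets; Pl\"unnecke--Ruzsa controls $|D^*|\le|A-A|\lesssim|A+A|^2/|A|$, and a further Szemer\'edi--Trotter application (Lemma~\ref{lem:stest}) estimates the second factor in terms of $|A+A|$ and $\Sigma$. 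Optimising over $\Delta,\Sigma$ and cleaning up the logarithmic losses yields the inequality of part~(i). Part~(ii) is then immediate on setting $|\Pi_1|=|\Pi_2|=T=|A|$, valid because the final clause of Lemma~\ref{lem:estimatinge3} provides the convex analogue of \eqref{e:e3(b,x)} and \eqref{e:e3/2(b,x)} under precisely these formal substitutions.

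The main obstacle is quantitative rather than conceptual: the target exponents $19,22,33,41$ are not delivered by a single Cauchy--Schwarz against $\E_3$ alone, which yields only the $|A+A|\gg|A|^{3/2}$ barrier in the convex case. To push past this barrier one must interpolate between~\eqref{e:e3(b,x)} and the intermediate-moment bound~\eqref{e:e3/2(b,x)} via H\"older's inequality, and coordinate this interpolation with the regularisation from Lemma~\ref{lem:regu} so that no energy is lost when restricting to $\Reps(B)$. Keeping the cumulative power of $\log|A|$ down to the claimed $23$ is where the bulk of the bookkeeping lies.
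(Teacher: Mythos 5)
Your proposal correctly identifies the skeleton: the truism $b-c=(a+b)-(a+c)$, the shift-equivalence classes, the bound $\sum_\sigma r^2(\sigma)\le\E_3$, Cauchy--Schwarz against $\E_3$, Lemma~\ref{lem:estimatinge3} for the upper bounds, and the use of Lemma~\ref{lem:regu} in place of the spectral method. It also correctly predicts that the final exponents require a H\"older interpolation with the $\E_{3/2}$ moment. However, as you yourself flag in your final paragraph, the quantitative heart of the argument is left unexecuted, and the specific routes you sketch at two of the key steps would not deliver the theorem.

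First, the regularisation rule $\Reps$ is not a generic ``prune anomalous elements'' device: the paper's rule $\mathcal{R}$ is engineered specifically around the set $P_\eps(A)$ of popular sums, setting $\mathcal{R}(A)=\{a\in A:\,|(a+A)\cap P_\eps(A)|\geq|A|/2\}$. This choice is what makes the lower bound work: after applying Lemma~\ref{lem:regu}, for the selected $B$ one simultaneously has (a) $\E(\mathcal{R}(B))\gg\E(B)$, so the popular dyadic difference set $D$ of $\mathcal{R}(B)$ (with scale $\Delta$) still carries a full share of the energy of $B$, and (b) every $r\in\mathcal{R}(B)$ has $\geq|B|/2$ partners $b\in B$ with $b+r\in P_\eps(B)$. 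Together these give the lower bound $\gg|D|\Delta|B|$ on solutions of $r-s=(b+r)-(b+s)$ with the constraints $r-s\in D$, $b+r\in P_\eps(B)$. Your description leaves both the rule and the role of $P_\eps$ unspecified, so the lower-bound side of the Cauchy--Schwarz is not actually secured.

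Second, your proposed bound on the count of solutions to $d=x-y$ via $|\Gamma_*|\leq|D^*|\cdot\max_d r_{S^*-S^*}(d)$ together with Pl\"unnecke--Ruzsa and one \ST application is too lossy; it will not produce the exponents $19,22,33,41$ (nor $30/19$ in the convex case). The paper instead uses the popularity of $P_\eps(B)$ to replace $x$ by $b+b'$ at a cost $\eps^{-1}|B+B||B|^{-2}$, then applies H\"older to the convolution $\sum_t r_{B-D}(t)r_{(B+B)-B}(t)$ to get the split $\E_{3/2}^{2/3}(B,D)\,\E_3^{1/3}(B,B+B)$, and bounds both factors by Lemma~\ref{lem:estimatinge3}. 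There is also a further ingredient you omit entirely: the upper bound \eqref{e:upper_delta} on $\Delta$ via \eqref{e:bdone}, which is then fed back by multiplying \eqref{e:pen} by $\Delta^{1/6}$, together with $|D|\Delta^2\gg\eps^{-1}\E(B)\geq\eps^{-1}|B|^4/|B+B|$. Without this step the $\Delta$-dependence cannot be eliminated, and without the H\"older split the exponents do not close. Your plan names the missing tool (interpolation with $\E_{3/2}$) but does not say where in the chain it goes, how $\Delta$ is removed, or how the separate dyadic scale $\Sigma$ you introduce on sums is handled --- indeed the paper does not pigeonhole $r_{A+A}$ dyadically at all, using $P_\eps$ instead, precisely because a second dyadic loss would spoil the logarithmic bookkeeping.
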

    \begin{proof}
We begin with a regularisation argument, applying Lemma~\ref{lem:regu} to the forthcoming procedure $\mathcal{R}$. This will yield a set of $B\subseteq A$ with $|B|\gg |A|$, containing a large subset $\mathcal{R}(B)$ supporting most of the additive energy of $B$. We will then study linear relations among elements of $B$ and $\mathcal{R}(B)$.

Let $P_\eps(A)$ be the set of popular sums of $A$, where `popular' is defined according to some $\eps\in (0,1)$:
\begin{equation}
P_\eps(A):=\left\{x\in A+A:\,r_{A+A}(x) \geq \epsilon \frac{|A|^2}{|A+A|}\right\}\,.
\label{e:pops}
\end{equation}
The set $P_\eps(A)$ supports most of the mass of $A\times A$. That is,
\begin{equation*}
|\{(a,b) \in A\times A:\,a+b\in P_\eps(A)\}|\geq (1-\epsilon)|A|^2\,.
\label{e:pmass}
\end{equation*}
Indeed, we have
\[
|A|^2 = \sum_xr_{A+A}(x) = \sum_{x\in P_\eps(X)}r_{A+A}(x) + \sum_{x\notin P_\eps(X)}r_{A+A}(x) < \sum_{x\in P}r_{A+A}(x) + \eps|A|^2\,.
\]
Let $\mathcal{R}(A)\subseteq A$ correspond to `rich' $x$-coordinates in $A\times A$, namely
\begin{equation*}
\label{e:popabs}
\mathcal{R}(A):=\left\{a\in A\colon |(a+A)\cap P_\eps(A)| \geq \frac{1}{2}|A|\right\}\,.
\end{equation*}
Clearly $\mathcal{R}$ is a deterministic procedure that creates a subset of $A$. We show that $|\mathcal{R}(A)|\geq (1-2\eps)|A|$, or equivalently, in the notation of Lemma~\ref{lem:regu}, that $\mathcal{R}(A)=\mathcal{R}_{2\eps}(A)$. To justify this claim, we use \eqref{e:pops}:
\begin{align*}
(1-\eps)|A|^2 & \leq |\{(a,b)\in \mathcal{R}(A)\times A\colon a+b\in \mathcal{P}_\eps(A)\}| + |\{(a,b)\in (A\setminus\mathcal{R}(A))\times A\colon a+b\in \mathcal{P}_\eps(A)\}|\\
&\leq |A||\mathcal{R}(A)| + \frac{1}{2}|A|(|A|-|\mathcal{R}(A)|)\,.
\end{align*}
A rearrangement shows that $|\mathcal{R}(A)|\geq (1-2\eps)|A|$ and so $\mathcal{R}(A) = \mathcal{R}_{2\eps}(A)$.

Having now defined a deterministic rule, let us apply Lemma~\ref{lem:regu} to the set $A$, setting $\eps = \frac{1}{2}\log^{-1}|A|$. We obtain a set $B\subseteq A$ with $|B|\geq |A|/2$ such that $\E(\mathcal{R}(B)) \gg \E(B)$. The advantage of dealing with the sets $B$ and $\mathcal{R}(B)$ versus $A$ and $\mathcal{R}(A)$ is twofold: to each $b\in \mathcal{R}(B)$ we can add at least $\frac{1}{2}|B|$ distinct members of $B$ to obtain a popular sum in $P_\eps(B)$. Secondly, we have ruled out the adverse potential scenario in which the energy of $\mathcal{R}(A)$ is much less than the energy of $A$.

Let $D\subseteq \mathcal{R}(B)-\mathcal{R}(B)$ be the dyadic set supporting the energy of $\E(\mathcal{R}(B))$, so that, for some $\Delta \geq 1$ we have 
\[
\E(B) \ll \E(\mathcal{R}(B)) \ll \Delta^2|D| \log|A|
\]
and for all $d\in D$, we have $r_{\mathcal{R}(B)-\mathcal{R}(B)}(d)\in [\Delta,2\Delta)$. Note that $D$ also supports the energy of $\E(B)$.

Having defined suitably regular sets, we now proceed to obtain the quantitative advantage of Theorem~\ref{t:main}. This arises from studying the following truism:
\begin{equation}\label{e:truism}
	r-s = (b+r)-(b+s),
\end{equation}
where pairs $(r,s)\in \mathcal{R}(B)\times \mathcal{R}(B)$ are popular by energy: $r-s\in D$, and $b\in B$. 

Let us impose the additional condition that $x:=b+r\in P_\eps(B)$, where $P_\eps(B)$ is defined as in \eqref{e:pops}. Since $b\in B$, there are $\gg |D|\Delta|B|$ solutions to \eqref{e:truism}. We will partition solutions to \eqref{e:truism} as 
\[d=x-y:\;d\in D,\, x\in P_\eps(B),\,y\in B+B\] by  the equivalence relation
\[
(r,s,b)\sim (r+t,s+t,b-t),\;t\in \mathbb R\,.
\]
The  number $Q$ of pairs of related triples is bounded from above by
\begin{equation}\label{e:eqr}
\sum_t r^3_{B-B}(t) =  \E_3(B)\,.
\end{equation}
Hence, by the Cauchy-Schwarz inequality, using the popularity of the set $P_\eps(B)$ and the H\"older inequality respectively, we get

\begin{align}
|A|^2|D|^2\Delta^2 &  \ll \E_3(B) |\{x-y=d\colon x\in P_\eps(B)\;,y\in B+B\;,d\in D\}| \label{e:thm2longtop}\\
&\leq\epsilon^{-1} \E_3(B)|B+B||B|^{-2}|\{b+b' -y = d\colon b,b'\in B\;,y\in B+B\;,d\in D\}|\nonumber\\
&= \epsilon^{-1}\E_3(B)|B+B||B|^{-2}\sum_t r_{B-D}(t)r_{(B+B)-B}(t)\nonumber \\
&\leq \epsilon^{-1} \E_3(B) |B+B||B|^{-2}\E_{3/2}^{2/3}(B,D)\E_3^{1/3}(B,B+B)\,. \nonumber
\end{align}

To each instance of $\E_3$  as well as $\E_{3/2}$ we apply Lemma~\ref{lem:estimatinge3}. We only present the case (i), as the case (ii) of a convex $|A|$ uses the estimates of the same lemma, the numerology change being tantamount to $|\Pi_1|=|\Pi_2|=T=|A|.$

Thus applying Lemma~\ref{lem:estimatinge3} and rearranging we obtain
\begin{equation}\label{e:pen}
|D|^{7/6}\Delta^2 \ll \epsilon^{-7/3} |\Pi_1|^{3}|\Pi_2|^{3}|B|^{-3/2}|B+B|^{5/3}T^{-9/2}\,.
\end{equation}

We can assume, again by Lemma~\ref{lem:estimatinge3},  that 
\begin{equation}\label{e:upper_delta}
\Delta\ll \epsilon^{-1} \frac{|\Pi_1|^2|\Pi_2|^2|B|^2}{T^3\E(B)}\,.
\end{equation} 
Indeed, by definition of $D$, there are $\sim \Delta|D|$ solutions to the equation
\[
r-s = d:\;\;\;r,s\in B,\;d\in D\,.
\]
Estimate \eqref{e:upper_delta} follows by compare this with  bound \eqref{e:bdone} from Lemma~\ref{lem:estimatinge3}, with $C=D$ and rearranging, 
using $\E(B) \gg \epsilon |D|\Delta^2$.

We now multiply both sides of \eqref{e:pen} by $\Delta^{1/6}$, using \eqref{e:upper_delta} in the right-hand side.  After that we rearrange, use
$$|D|\Delta^2 \gg \epsilon^{-1} \E(B) \geq \epsilon^{-1}\frac{|B|^4}{|B+B|},\,$$ as well as  $|A+A|\geq |B+B|$ and $|B|\gg |A|$ to complete the proof.

\end{proof}

We remark that we can easily re-purpose the above proof to retrieve the best known {\em few products, many sums} inequality 
$$|AA|^{14}|A+A|^{10}\geq |A|^{30-o(1)}$$
by Olmezov, Semchankau and Shkredov \cite{OSS}.

We note also that we can obtain similar results involving the difference set by replacing $k=2$ with $k=12/7$ in Lemma \ref{lem:regu}. This recovers the result of Schoen and Shkredov  \cite{schoenshkredov} that $|A-A|\gtrsim |A|^{8/5}$ for $|A|$ convex.

\section{Proof of Theorem~\ref{t:aaaa}}\label{sec:aaaa}

In this section we prove a new lower bound on $|AA+AA|$. The theorem follows immediately by combining the bounds from the two forthcoming propositions, the first one being an easier version of Proposition \ref{p:main} and the second of the argument in the proof of Theorem \ref{t:five} around estimates \eqref{e:truism}-\eqref{e:thm2longtop}. 

We once again assume that $A\subset \R_{>0}$. Similar to Proposition \ref{p:main}, the forthcoming Proposition~\ref{p:aaaa} uses Konyagin and Shkredov's extension  of Solymosi's geometric argument \cite{KS1, KS2, Solymosi}. Only now the vector sums constructed lie 
in  $(AA+AA)\times (AA+AA)$ and the set of slopes used instead of $S_\tau$ are all of the slopes from $A/A$. This idea is due to Balog \cite{Balog}.

A variant of Proposition~\ref{p:aaaa} can be extracted from the paper by  Iosevich, Roche-Newton and the first author \cite[Proof of Theorem 2]{IRR}. Below we give a brief self-contained proof.
\begin{proposition}\label{p:aaaa}
For a finite positive set of positive reals $A$,
\[
|AA+AA|^2\gg  |A/A|^{2/3}|A|^{5/2}\,.
\]
\end{proposition}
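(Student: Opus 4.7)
My plan is to adapt Solymosi's geometric sum--product argument in the form enhanced by Balog, working directly with the product set $AA\times AA$ and using every slope in $A/A$.  The essential gain of this setup is that each line through the origin of slope $\lambda\in A/A$ supports at least $|A|$ points of $AA\times AA$, a lower bound that is uniform across $A/A$ rather than restricted to a dyadic sub-class of slopes.

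Concretely, for each $\lambda\in A/A$ the line $\ell_\lambda$ of slope $\lambda$ meets $A\times A$ in the $|A_\lambda|$ points $\{(a,\lambda a):a\in A_\lambda\}$.  Dilating coordinates by an arbitrary $c\in A$ gives points $(ca,\lambda ca)\in AA\times AA$ still on $\ell_\lambda$, with $|A\cdot A_\lambda|=|AA_\lambda|$ distinct $x$-coordinates, so
\[
|\ell_\lambda\cap(AA\times AA)|\;\geq\;|AA_\lambda|\;\geq\;\max(|A|,|A_\lambda|).
\]
Ordering the slopes $\lambda_1<\cdots<\lambda_M$ of $A/A$, with $M=|A/A|$, and forming the $|AA_{\lambda_i}|\cdot|AA_{\lambda_{i+1}}|$ vector sums of a point on $\ell_{\lambda_i}$ with a point on $\ell_{\lambda_{i+1}}$ for each consecutive pair, one obtains points of $(AA+AA)\times(AA+AA)$ lying strictly inside the open angular sector bounded by $\ell_{\lambda_i}$ and $\ell_{\lambda_{i+1}}$ (using $A\subseteq\R_{>0}$ for positivity of coordinates).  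These open sectors are pairwise disjoint for distinct consecutive pairs, so the vector sums are distinct across pairs and
\[
|AA+AA|^2 \;\geq\;\sum_{i=1}^{M-1}|AA_{\lambda_i}|\cdot|AA_{\lambda_{i+1}}|.
\]

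To reach the claimed exponents, I would combine the uniform bound $|AA_\lambda|\geq|A|$ with $|AA_\lambda|\geq|A_\lambda|$ and the identity $\sum_\lambda|A_\lambda|=|A|^2$.  Using only the first bound gives $|AA+AA|^2\geq(M-1)|A|^2$, which already proves the claim whenever $|A/A|\gtrsim|A|^{3/2}$.  In the complementary regime $|A|\leq|A/A|\lesssim|A|^{3/2}$, I would dyadically pigeonhole the slopes of $A/A$ according to the value of $|A_\lambda|$ and use the Szemer\'edi--Trotter bound $|\{\lambda\in A/A:|A_\lambda|\geq k\}|\ll|A|^4/k^3$ from Lemma~\ref{lem:stest} to control the size of the dominant dyadic class; running Solymosi's sector argument inside that class (consecutive in the induced ordering, still with disjoint sectors) combined with a H\"older interpolation between the two lower bounds on $|AA_\lambda|$ should then produce the missing factor $|A/A|^{2/3}|A|^{1/2}$ beyond the trivial estimate.

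The main obstacle I anticipate is precisely this final balancing step in the intermediate regime.  Since consecutive slopes in $A/A$ can carry wildly varying $|A_\lambda|$, the Balog--Solymosi sum $\sum_i|AA_{\lambda_i}|\cdot|AA_{\lambda_{i+1}}|$ is not straightforwardly lower-bounded by $\sum_i|AA_{\lambda_i}|^2$ or by $(\sum_i|AA_{\lambda_i}|)^2/M$; an adversarial alternation of large and small $|A_\lambda|$ along the slope ordering is the concrete worst case.  Overcoming this requires choosing the dyadic class so that both the trivial input $\sum_\lambda|A_\lambda|=|A|^2$ and the ST tail bound for rich slopes are simultaneously tight, and then running Solymosi's sector construction only on the chosen class, so that the uniform Balog lower bound $|A|$ on each $|AA_\lambda|$ can be promoted to the sharp exponent $5/2$.
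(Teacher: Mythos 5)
Your proposal stalls exactly where the paper's argument earns its extra factor. The starting point is the same -- Balog's observation that each line of slope $\lambda\in A/A$ carries at least $|A|$ points of $AA\times AA$, and the resulting consecutive-pair bound $|AA+AA|^2\gtrsim |A/A|\,|A|^2$. But your route to improving on this, namely using the larger quantities $|AA_{\lambda_i}|$ and interpolating by dyadic pigeonholing in $|A_\lambda|$ with a H\"older step, is never carried out: as you yourself point out, consecutive slopes can carry wildly different values of $|A_\lambda|$, so the quantity $\sum_i|AA_{\lambda_i}|\,|AA_{\lambda_{i+1}}|$ has no obvious lower bound beyond the adversarial alternation worst case. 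That acknowledged obstacle \emph{is} the gap; no mechanism is offered to overcome it, and the claimed exponent is not reached.

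The paper takes a genuinely different turn and sidesteps the balancing issue entirely. It does \emph{not} use all $|AA_\lambda|$ points on $\ell_\lambda$; instead it fixes a single $v_\lambda\in A\times A$ on each line and uses only the $|A|$ dilates $Av_\lambda\subset AA\times AA$, so that every pair of slopes contributes exactly $|A|^2$ vector sums, uniformly in $\lambda$. The extra factor over the trivial $|A/A|\,|A|^2$ then comes from the Konyagin--Shkredov bunching refinement (as in Proposition~\ref{p:main}): partition $A/A$ into bunches of $N:=C|AA+AA|^2/(|A|^2|A/A|)$ consecutive slopes, apply inclusion-exclusion, and deduce that for a positive proportion of bunches the collision term is large. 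Each collision is a solution to a linear equation $a=c_1a_1-c_2a_2$ with $a,a_1,a_2\in A$ and fixed dilation constants $c_1,c_2$, so one obtains $\gg |A|^2 N^{-2}$ such solutions; comparing with the upper bound $\ll |A|\,|A/A|^{2/3}$ from estimate \eqref{e:bdone} of Lemma~\ref{lem:estimatinge3} (taking $\Pi_1=A$, $\Pi_2=A/A$, $T=|A|$, since $a=b\cdot\frac{a}{b}$ for every $b\in A$) and unwinding $N$ gives $|AA+AA|^2\gg |A/A|^{2/3}|A|^{5/2}$. The missing idea in your write-up is thus the bunch-and-collision mechanism, which replaces the attempt to exploit variable line richness; without it the consecutive-pair bound cannot be pushed past $|A/A|\,|A|^2$.
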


\begin{proposition}\label{p:a(a+a)}
Let $A\subseteq \C$. Then
\[
|AA+AA|^5 \gg \frac{|A|^{13}}{|A/A|^5}\log^{-9/2}|A|\,.
\]
\end{proposition}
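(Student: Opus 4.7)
My plan is to adapt the truism-to-Cauchy--Schwarz computation in \eqref{e:truism}--\eqref{e:thm2longtop} from the proof of Theorem~\ref{t:main}, inserting a multiplicative translation by an arbitrary scalar $c\in A$ so that the popular sums in $B+B$ are promoted into $AA+AA=A(A+A)$. This should lift the Theorem~\ref{t:main} scheme from bounding $|A+A|$ to bounding $|AA+AA|$, with the multiplicative doubling $|A/A|$ taking the role of the structural parameter.

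I would first apply Lemma~\ref{lem:regu} to $A$ with the same rule $\mathcal R$ used in Section~\ref{sec:proofofthm2}, extracting $B\subseteq A$ with $|B|\gg|A|$ whose additive energy is concentrated on the rich subset $\mathcal R(B)$. Choose a dyadic popular-difference set $D\subseteq \mathcal R(B)-\mathcal R(B)$ with $r_{\mathcal R(B)-\mathcal R(B)}(d)\in[\Delta,2\Delta)$ and $|D|\Delta^2\gtrsim\E(B)\log^{-1}|A|$, together with the popular sumset $P_\eps(B)\subseteq B+B$. Multiplying the truism $r-s=(b+r)-(b+s)$ through by $c\in A$ gives
\[c(r-s)=c(b+r)-c(b+s),\]
where $c(b+r),c(b+s) \in A(B+B)\subseteq AA+AA$. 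Running over the $\gg|A||D|\Delta|B|$ quadruples $(c,r,s,b)\in A\times\mathcal R(B)^2\times B$ with $r-s\in D$ and $b+r\in P_\eps(B)$, the equivalence $(r,s,b)\sim(r+t,s+t,b-t)$ (with $c$ held fixed) has squared class sizes summing to at most $\E_3(B)$, by the same computation that produced \eqref{e:eqr}. Cauchy--Schwarz then yields
\[
\bigl|\{(c,d,x,y)\in A\times D\times(AA+AA)^2:\,x-y=cd\}\bigr|\gtrsim\frac{|A|(|D|\Delta|B|)^2}{\E_3(B)}.
\]

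For the matching upper bound I would use the popularity of $b+r\in P_\eps(B)$ to rewrite each $x=c(b+r)$ via its $\geq\eps|B|^2/|B+B|$ many additive decompositions, and then apply H\"older's inequality to split the resulting count into a product of $\E_{3/2}^{2/3}$ and $\E_3^{1/3}$ terms, exactly analogously to \eqref{e:thm2longtop}. These energies would then be bounded via Lemma~\ref{lem:estimatinge3}(i) with $\Pi_1=\Pi_2=A$ and parameter $T\gtrsim |A|^2/|A/A|$, where this value of $T$ comes from passing to the dominant dyadic popular shell of $AA$ supplied by the Cauchy--Schwarz lower bound $\E^\times(A)\geq|A|^4/|A/A|$. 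The main obstacle I expect is bookkeeping: the extra factor $|A|$ from the multiplication by $c$ interacts delicately with the H\"older exponents $2/3$ and $1/3$ and with the multiplicative parameter $T\gtrsim |A|^2/|A/A|$, and it is the precise balance of these contributions (rather than a crude application of Theorem~\ref{t:main} to a popular shell of $AA$, which only yields a weaker bound $|AA+AA|^{19}|A/A|^{41}\gtrsim|A|^{71}$) that should produce the exponents $5$ and $13$ and the $\log^{-9/2}|A|$ loss of the claimed inequality.
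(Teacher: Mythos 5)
Your opening step is on the right track (the truism, equivalence relation, Cauchy--Schwarz), but the specific route you propose has two linked problems, one structural and one concrete.

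Structurally, multiplying the truism through by $c\in A$ does not actually buy you anything. For a fixed $c$, the equivalence classes and their sizes are unchanged, and the classes correspond to quadruples $(c,cd,x,y)$ with $x=c(b+r)$, $y=c(b+s)$; the factor $c$ is reversible ($x\mapsto x/c$, $y\mapsto y/c$ return you to $B+B$), so both sides of your Cauchy--Schwarz inequality simply acquire the same factor $|A|$, which cancels. You are left with exactly the Theorem~\ref{t:five} count $|\{(d,x',y')\in D\times P_\eps(B)\times(B+B):x'-y'=d\}|$, which bounds $|A+A|$ (and then trivially $|AA+AA|\geq|A+A|$), but never touches $AA+AA$ directly. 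If you run Theorem~\ref{t:five}(i) with the only admissible choice $\Pi_1=A,\ \Pi_2=A/A,\ T=|A|$ you get $|A+A|^{19}\gtrsim|A|^{52}|A/A|^{-22}$, which (since $|A/A|\geq|A|$) is strictly weaker than Proposition~\ref{p:a(a+a)}. Concretely, your proposed parameters $\Pi_1=\Pi_2=A$ and $T\gtrsim|A|^2/|A/A|$ are not legal inputs to Lemma~\ref{lem:estimatinge3}: that lemma needs $r_{\Pi_1\Pi_2}(a)\geq T$ for \emph{every} $a$ in the first argument of the energy, i.e.\ $r_{AA}(b)\geq T$ for all $b\in B$. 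But $A$ is in general not even a subset of $AA$, and $r_{AA}(b)$ can be $O(1)$; passing to a popular shell of $AA$ does not help because that shell is a subset of $AA$, not of $A$. Also note $AA+AA\neq A(A+A)$; one only has $A(A+A)\subseteq AA+AA$.

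The paper's actual proof is considerably simpler and places the $AA+AA$-gain in a different spot. It works directly with $A$ (no regularisation, no $\mathcal R$, no popularity condition on $a+b$), writes $|A||D|\Delta\leq\E_3^{1/2}(A)\,|\{(x,y,d)\in(A+A)^2\times D:x-y=d\}|^{1/2}$ by a single Cauchy--Schwarz (not the full H\"older chain of \eqref{e:thm2longtop}), and then the multiplicative structure enters \emph{only} in the upper bound on $|\{x-y=d\}|$: via \eqref{e:bdone} with $\Pi_1=1/A$, $\Pi_2=A(A+A)$, $T=|A|$ (using $x=(ax)\cdot a^{-1}$), which introduces $|A(A+A)|\leq|AA+AA|$. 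Together with $\E_3(A)\ll|A/A|^2|A|\log|A|$ (from \eqref{e:e3(b,x)} with $\Pi_1=A,\Pi_2=A/A,T=|A|$), the $\Delta^{1/3}$ trick and $\E(A)\geq|A|^4/|A+A|$, this gives the claimed exponents. The essential idea your write-up is missing is to put $A(A+A)$ into the $\ST$ bound on the difference count rather than into the lower-bound count.
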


It remains to prove Propositions~\ref{p:aaaa} and \ref{p:a(a+a)}.

\begin{proof}[Proof of Proposition~\ref{p:aaaa}]
Recall that $A$ is positive.
For each $\lambda\in A/A$ we fix some vector $v_\lambda=(a_\lambda,\lambda a_\lambda)\in A\times A$ lying on the line through the origin with slope $\lambda$. Clearly, for any $b\in A$, the dilates $bv_\lambda$ of the vector $v_\lambda$ are in $AA\times AA$. Thus for any $\lambda_1,\lambda_2\in A/A$, we have the sums of such dilates lie in $(AA+AA)\times (AA+AA)$:
\[
\forall a_1,a_2\in A,\, \lambda_1,\lambda_2 \in A/A,\,  a_1 v_{\lambda_1}+  a_2 v_{\lambda_2} \in (AA+AA)\times (AA+AA)\,.
\]
For fixed distinct $\lambda_1,\lambda_2\in A/A$, we get $|A|^2$  new vector sums, with slope between $\lambda_1$ and $\lambda_2$.

By considering only vector sums from consecutive slopes $\lambda_i,\lambda_{i+1}$, it thus follows that 
\[
|AA+AA|^2\geq (|A/A| - 1) |A|^2\,;\]
we will further attempt to improve by considering vector sums constructed within bunches of slopes.

Similar to  \eqref{e:en} in the proof of Proposition \ref{p:main} define 
\[
N:= C \frac{|AA+AA|^2}{|A|^2|A/A|}\,,
\]
for a sufficiently large absolute $C$.

As in the proof of Proposition \ref{p:main}, partition the set of slopes $A/A$ (equivalently, the lines through the origin supporting $A\times A$)  into $\lfloor |A/A|/N\rfloor$ consecutive ``full'' bunches containing $N$ lines, and at most one bunch consisting of fewer than $N$ lines, which gets deleted. Once again, $N$ is much bigger than $2$ and much smaller than  $|A/A|$.

To each bunch $\mathcal B$ and distinct $\lambda_i,\lambda_j\in \mathcal B$ we construct $|A|^2$ vector sums in $(AA+AA)^2$, by considering the vector sums of the dilates of the vectors $v_{\lambda_i}, v_{\lambda_j}$ by elements of $A$, to generate the sum set $Av_{\lambda_i} + Av_{\lambda_j}\subseteq (AA+AA)^2$. 

Define $q$
and
\[
Q_B = \sum_{\lambda_1,\lambda_2,\lambda_3,\lambda_4 \in B}\left| (Av_{\lambda_i} + Av_{\lambda_j})\cap
(Av_{\lambda_k} + Av_{\lambda_l})\right|
\]
where the sum is taken over $\lambda_1,\lambda_2,\lambda_3,\lambda_4 \in B$ where $\lambda_1\neq \lambda_2$, $\lambda_3\neq \lambda_4$ and $\{\lambda_1,\lambda_2\} \neq \{\lambda_3,\lambda_4\}$/

 By inclusion-exclusion, the number of new elements of $(AA+AA)\times (AA+AA)$ generated by $\mathcal B$ is at least:
\[
\binom{N}{2}|A|^2 - Q_B\,.
\]
It is is a direct analogue of \eqref{e:ie}.
  
By repeating verbatim the argument in the proof of Proposition \ref{p:main} between \eqref{e:ie} and \eqref{e:S}, we argue that, owing to the above choice of $N$, at least for $50\%$ of the bunches $\mathcal B$ the collision term should be large. That is, the value $Q$ associated to at least half the bunches satisfies
\[
Q\gg N^4|A|^2 \left( \frac{|A|^2|A/A|}{|AA+AA|^2} \right)^2\,.
\]
Furthermore, as a direct analogue of the arguments between statements \eqref{int_s} - \eqref{e:S} in the proof of Proposition \ref{p:main} we conclude that there are two dilates of $A$ by some $c_1,\,c_2$, so that
\[
\left| \{ (a,a_1,a_2)\in A^3:\, a=c_1a_1-c_2a_2\}\right| \gg |A|^2N^{-2}\gg  |A|^2  \frac{|A|^4|A/A|^2}{|AA+AA|^4} \,.
\]
We compare this with the upper bound for the number of solutions, from Lemma \ref{lem:estimatinge3}; we use bound \eqref{e:bdone}, with $\Pi_1=A$, $\Pi_2=A/A$ and $T=A$, since $a=b\frac{a}{b}$ for any $b\in A$.

Rearranging completes the proof.
\end{proof}

\begin{proof}[Proof of Proposition~\ref{p:a(a+a)}]
Once again,  dyadically partition the set $A-A$ to obtain $D\subseteq A-A$ and $\Delta\geq 1$ so that $\E(A)\geq |D|\Delta^2\log|A|$ with $r_{A-A}(d)\in [\Delta,2\Delta)$ for each $d\in D$.

There are at least $|A||D|\Delta$ solutions $(a,b,c)\in A^3$ to the equation
\[
a-c = (a+b)-(b+c):\, a-c =d\in D\,.
\]

We proceed as in the proof or Theorem~\ref{t:five}, associating an equivalence relation to these solutions, so that that $(a,b,c)\sim (a+t,b-t,c+t)$ for some $t\in \R$.

We have the analogue of \eqref{e:thm2longtop}
\begin{equation}\label{e:A(A+A)1}
|A||D|\Delta \leq \E_3^{1/2}(A)|\{(x,y,d)\in (A+A)^2\times D\colon x-y=d\}|^{1/2}\,.
\end{equation}
We bound the quantity $|\{x-y=d\colon x,y\in A+A, d\in D\}|$ using Lemma \ref{lem:estimatinge3}, estimate \eqref{e:bdone}, with $\Pi_1=1/A$, $\Pi_2=A(A+A)$ and $T=|A|$, for 
$x= (ax)\frac{1}{a}$ for any $a\in A$. 
 It follows that 
$$
|\{x-y=d\colon x,y\in A+A, d\in D\}| 
\ll |A|^{-1/3}|A+A|^{2/3}|D|^{2/3}|A(A+A)|^{2/3}\,.
$$
Furthermore, once again by Lemma \ref{lem:estimatinge3}, bound \eqref{e:e3(b,x)}, with $\Pi_1=A$, $\Pi_2=A/A$, and $T=|A|$, since $a=b\frac{a}{b}$, for any $b\in A$, we have

$$
\E_3(A)\ll |A/A|^2|A|\log|A|\,.$$ 

Multiplying both sides by $\Delta^{1/3}$, the bound \eqref{e:A(A+A)1} then becomes
\[
|A|^{2/3}(|D|\Delta^2)^{2/3}\ll |A/A||A+A|^{1/3}|A(A+A)|^{1/3}\Delta^{1/3} \log^{1/2}|A|\,.
\]

Similar to \eqref{e:upper_delta}
we have
\[\Delta\ll \frac{|A||A/A|^2}{\E(A)}\log|A| \,,\]

Thus
$$
|A|^{1/3} \E(A) \ll |A/A|^{5/3} |A+A|^{1/3}|A(A+A)|^{1/3} \log^{3/2}|A|\,.
$$

The proof is complete after rearranging after using $\E(A)\geq \frac{|A|^4}{|A+A|}$ and dominating $A+A$ and $A(A+A)$ by $AA+AA$. 
\end{proof}

We remark, curiously, that if one uses an additional assumption in Theorem \ref{t:aaaa} that $A$ is convex, then its estimate improves slightly to
$|AA+AA|\geq |A|^{8/5-o(1)}$. The same exponent $\frac{8}{5}-o(1)$ is the best one known in the few products, many sums scenario for $|A+A|$ when $|AA|\to |A|$, \cite{shkredov}. The same exponent is also the best one known for $|A-A|$ when $A$ is convex (but not for $|A+A|$), \cite{schoenshkredov}. See the discussion in the outset of this paper.

\section*{Acknowledgements}
The authors thank Kit Battarbee for carefully reading the draft of this manuscript and Ilya Shkredov for discussions.

\Addresses

\end{document}